\numberwithin{equation}{section}
\theoremstyle{plain}
\newtheorem{thm}{Theorem}[section]
\newtheorem{cor}[thm]{Corollary}
\newtheorem{prop}[thm]{Proposition}
\theoremstyle{definition}
\newtheorem{remark}[thm]{Remark}
\def\Sp{{\rm Sp}}  
\def\SU{{\rm SU}}  
\def\SO{{\rm SO}}  
\def\Spin{{\rm Spin}}
\def\Spinc{{\rm Spin}^c}
\def\U{{\rm U}}
\def\O{{\rm O}}
\def\KO{K{\rm O}}
\def\Sq#1{{\rm Sq}^{#1}}
\def\Z{{\mathbb Z}}
\def\R{{\mathbb R}}
\def\C{{\mathbb C}}
\def\H{{\mathbb H}}
\def\Hl{{\mathbb H}_{\lambda}}
\def\bar#1{\overline{#1}}   
\def\mod#1{(\hbox{\rm mod}\, {#1})}
\def\oo{\mathfrak{o}}
\begin{document}

\title[Obstruction theory on $7$-manifolds]{Obstruction theory on $7$-manifolds}

\author[M.~\v Cadek, M.~C.~Crabb and T. Sala\v c]{MARTIN \v CADEK, MICHAEL
CRABB and TOM\' A\v S SALA\v C}

\address{\newline Department of Mathematics, Masaryk University,
Kotl\' a\v rsk\' a 2, 611 37 Brno, Czech Republic}
\email{cadek@math.muni.cz}

\address{\newline Institute of Mathematics, University
of Aberdeen, Aberdeen AB24 3UE, U.K.}
\email{m.crabb@abdn.ac.uk}

\address{\newline Charles University, Faculty of Mathematics and Physics, 
Sokolovsk\' a 83 186 75 Prague, Czech Republic}
\email{salac@karlin.mff.cuni.cz}

\date {September 19, 2019}

\subjclass[2010]{55R25, 55R40, 55S35}

\keywords{Obstruction theory, characteristic classes, reduction of the
structure group}


\thanks{The research of the third author was supported by the grant 17-01171S
of the Grant Agency of the Czech Republic.}

\abstract{This paper gives a uniform, self-contained,
and direct approach to a variety of obstruction-theoretic problems on
manifolds of dimension $7$ and $6$. We give necessary and sufficient cohomological 
criteria for the existence of various $G$-structures  on vector bundles over such manifolds especially using low dimensional representations of $\U(2)$.}
\endabstract

\maketitle

\section{Introduction}
Let $M$ be a connected, closed, smooth, spin$^c$ manifold of dimension
$m=6$ or $7$ and let $\xi$ be an $m$-dimensional oriented real vector bundle
over $M$ admitting a spin$^c$-structure. For various homomorphisms
$\rho:G\to \SO(m)$ from a compact Lie group $G$ to $\SO(m)$ we consider  the problem 
of reducing the structure group $\SO(m)$ of the vector bundle $\xi$ to $G$ via 
the representation $\rho$. Necessary and sufficient conditions for such 
reductions will be obtained in terms of the cohomology of $M$ and cohomology 
characteristic classes of $M$ and $\xi$. Thus as for methods and results
the present paper is a continuation of \cite{CCV1}. 

Most of our results depend on the existence of $2$-dimensional complex vector bundles
over low dimensional manifolds. So we can provide more or less complete answers for all homomorphisms  $\rho$ which are connected 
with low dimensional representations of the group $G=\U(2)$. 

Our results  complete the characterization of $m$-dimensional vector bundles over $m$-dimensional complexes ($m=6,7$) given in \cite{W} and the results on the existence of vector fields over $m$-dimensional manifolds in \cite{T}.  

We conclude this introduction by describing our results for 
$7$-manifolds in the case that $G$ is the group $\Sp (1)$ and the manifold $M$ and vector bundle $\xi$ are spin.

There are $4$ irreducible real $\Sp (1)$-modules of dimension
at most $7$: the Lie algebra $A_1$ of dimension $3$,
the defining $4$-dimensional module $E$ ($=\H$),
a module $A_2$ of dimension $5$,
and a module $A_3$ of dimension $7$.
We thus have, up to equivalence, the following seven $7$-dimensional
real $\Sp (1)$-modules and associated representations $\rho$:

\smallskip

\par\noindent
(i) $\R^7$, (ii) $E\oplus\R^3$, (iii) $A_1\oplus\R^4$, 
(iv) $E\oplus A_1$, 
(v) $A_1\oplus A_1\oplus\R$, (vi) $A_2\oplus\R^2$, (vii) $A_3$.

\begin{thm}\label{symplectic}
Let $\xi$ be a $7$-dimensional vector bundle with $w_1\xi =0$
and $w_2\xi =0$ over a $7$-dimensional spin manifold $M$. 
Then the structure group
of $\xi$ reduces from $\Spin (7)$ to $\Sp (1)$ through $\rho$ if
and only if the spin characteristic class 
$q_1(\xi )\in H^4(M;\,\Z )$ is divisible by

\smallskip

\par\noindent
{\rm (i)} $0$, {\rm (ii)} $1$, {\rm (iii)} $2$, {\rm (iv)} $3$,
{\rm (v)} $4$, {\rm (vi)} $10$, {\rm (vii)} $28$.
\end{thm}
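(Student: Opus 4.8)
The plan is to reduce everything to a single, well-understood problem: classifying the quaternionic line bundles (i.e. $\Sp(1)$-bundles, equivalently symplectic rank-$1$ bundles $\rKSp$-classes) over the $7$-manifold $M$, and then computing how each of the seven representations $\rho$ sees the characteristic class $q_1$ of the associated $7$-dimensional real bundle. An $\Sp(1)=\SU(2)$-structure on $\xi$ through $\rho$ is the same as a quaternionic line bundle $\eta$ over $M$ together with a bundle isomorphism $\xi\cong\rho_*\eta$, where $\rho_*\eta$ denotes the real $7$-bundle associated to $\eta$ via the module listed in (i)--(vii). So the first step is to understand the set of such $\eta$: a quaternionic line bundle over a $7$-complex is classified by a map $M\to B\Sp(1)=\H P^\infty$, and since $\H P^\infty$ has cells only in dimensions $0,4,8,\dots$, over a $7$-dimensional base such a bundle is determined up to isomorphism by its single characteristic class $c\in H^4(M;\Z)$ (the generator of $H^4(\H P^\infty;\Z)\cong\Z$ pulled back), with \emph{every} class in $H^4(M;\Z)$ realised. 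This is where I expect the dimension hypothesis $m=7$ to do its essential work.

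The second step is the module-by-module computation of $q_1(\rho_*\eta)$ as an integer multiple of $c=c(\eta)$. For the defining module $E=\H$ one has $\rho_*\eta=\eta$ viewed as a real $4$-plane bundle, and $q_1$ of this is (by the very definition of the spin characteristic class, normalised so that $q_1=\tfrac12 p_1$ for spin bundles, and $p_1(\H\text{-line})=2c$) exactly $c$ — giving the divisibility $1$ in case (ii). The adjoint module $A_1=\su(2)$ is the symmetric-square/$3$-dimensional real module; the Lie algebra bundle $\rho_*\eta$ has $p_1=4c$ hence $q_1=2c$ for case (iii), while the direct sums add these contributions, yielding $0,1,2,3,4$ for (i)--(v) additively. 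The genuinely arithmetic inputs are the two higher irreducibles: $A_2$ (the $5$-dimensional real module $\mathrm{Sym}^4$-type, the real form of the irreducible $\SU(2)$-rep of highest weight $4$, or rather weight-$2$ depending on normalisation) and $A_3$ (the $7$-dimensional module). For these I would compute $q_1$ by evaluating $p_1$ of the associated real bundle via the representation's weights: if the weights of the $\C$-ification are $\{\pm k_j\}$ then $p_1=\big(\sum_j k_j^2\big)c$, so I would tabulate $\sum k_j^2$ and take half (for spin bundles) to read off the multipliers $10$ and $28$ in (vi) and (vii).

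The \textbf{third and decisive step} is to prove that these numerically necessary conditions are also \emph{sufficient} — that is, that $q_1(\xi)$ divisible by the stated integer $N$ actually guarantees a reduction, not merely that a reduction forces the divisibility. The necessity direction is the Chern--Weil/weight computation just described. For sufficiency one argues as follows: given $\xi$ with $w_1=w_2=0$ and $q_1(\xi)=N\cdot c_0$ for some $c_0\in H^4(M;\Z)$, realise the quaternionic line bundle $\eta$ with $c(\eta)=c_0$ (possible by step one), form $\rho_*\eta$, and show $\xi\cong\rho_*\eta$ as oriented spin $7$-bundles. Here the key point is that \emph{two} spin $7$-bundles over a $7$-complex are isomorphic as soon as their relevant primary invariants agree, because the obstructions to an isomorphism (equivalently, to a homotopy of classifying maps into $B\Spin(7)$) live in cohomology groups that, in this dimension and under the spin hypotheses, are controlled entirely by $q_1$ together with the low-dimensional Stiefel--Whitney data that we have assumed to vanish. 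I expect \textbf{this comparison of $\xi$ with $\rho_*\eta$ to be the main obstacle}: one must verify that no \emph{secondary} obstruction in $H^6(M;\pi_5)$ or $H^7(M;\pi_6)$ can survive — i.e. that every such potential obstruction is either automatically zero under $w_1=w_2=0$ or can be killed by adjusting $\eta$ within its $H^4$-class (using the freedom in the stabilisation and the structure of $\pi_*\Spin(7)$). Carrying this out will rely on the obstruction-theoretic machinery developed earlier in the paper (and in \cite{CCV1}), reducing to a careful check that the only essential invariant distinguishing $7$-dimensional spin bundles here is the divisibility of $q_1$.
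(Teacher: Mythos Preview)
Your overall strategy matches the paper's: realize an $\Sp(1)$-bundle $\eta$ with prescribed second Chern class, compute $q_1(\rho_*\eta)$ via the weights of $\rho$, and then invoke the classification of spin $7$-bundles over $M$ by $q_1$ (which is exactly Proposition~\ref{spinc7}). Steps~2 and~3 are correct and are what the paper does, packaged as Theorem~\ref{7u2} and Corollary~\ref{7so3}.

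The genuine gap is in Step~1. Your claim that quaternionic line bundles over a $7$-complex are classified by $H^4(M;\Z)$, with every class realized, does \emph{not} follow from the cellular structure of $\H P^\infty$. That $\H P^\infty$ has cells only in dimensions divisible by $4$ tells you merely that $[M,\H P^\infty]=[M,S^4]$ for $\dim M\le 7$; but $S^4$ is far from a $K(\Z,4)$ in this range, since $\pi_5(S^4)=\pi_6(S^4)=\Z/2$ and $\pi_7(S^4)=\Z\oplus\Z/12$. The map $c_2\colon [M,B\Sp(1)]\to H^4(M;\Z)$ is therefore neither injective nor surjective for a general $7$-complex. The first obstruction to realizing a given $u\in H^4(M;\Z)$ is $\Sq{2}\rho_2(u)\in H^6(M;\Z/2)$, which need not vanish; it \emph{does} vanish when $M$ is a spin (indeed spin$^c$) manifold, by a Wu-formula and Poincar\'e-duality argument, but there is a further obstruction in $H^7(M;\Z/2)$ still to be dealt with.

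The paper establishes the existence you need (Proposition~\ref{u2}, specialized to $l=0$) by a different and genuinely deeper route: build a stable bundle over $M^{(5)}$ with the right $q_1$, extend it stably to all of $M$ (trivially, since $\pi_5(B\O)=\pi_6(B\O)=0$), and then invoke Proposition~\ref{eta}---which rests on Theorem~\ref{cs}, the $\KO_{\Z/2}$-equivariant obstruction argument---to destabilize back down to a $2$-dimensional complex bundle $\eta$. This is where the hypothesis that $M$ is a closed spin$^c$ \emph{manifold}, and not merely $7$-dimensional, does essential work. So your instinct that Step~1 is the crux was right, but the work required is substantially more than a cell count.

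(A minor point: you do not actually need the \emph{uniqueness} half of Step~1, only the existence of some $\eta$ with the right $c_2$; the failure of injectivity of $c_2$ is therefore harmless for your argument.)
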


\begin{cor}
A $7$-dimensional spin bundle $\xi$ over a spin 
$7$-manifold $M$ admits $4$ linearly independent sections if
and only if $w_4(\xi )=0$.
\end{cor}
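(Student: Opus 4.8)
The plan is to derive the Corollary from Theorem~\ref{symplectic} by relating the existence of $4$ linearly independent sections to a structure-group reduction. First I would observe that a $7$-dimensional bundle $\xi$ admits $4$ linearly independent sections precisely when $\xi$ splits as $\eta\oplus\R^4$, where $\eta$ is a $3$-dimensional oriented bundle; equivalently, the structure group reduces from $\SO(7)$ to $\SO(3)$ acting on the first factor while fixing the trivial $\R^4$. Under the spin hypotheses $w_1\xi=w_2\xi=0$, the relevant reduction of $\Spin(7)$ is to $\Spin(3)=\Sp(1)$ acting on a $3$-dimensional module together with a trivial $\R^4$. Comparing with the list of seven modules, this is exactly case~(iii), $A_1\oplus\R^4$, since $A_1$ is the $3$-dimensional adjoint module on which $\Sp(1)$ acts as $\SO(3)$ and $\R^4$ is trivial.

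By Theorem~\ref{symplectic}(iii), this reduction exists if and only if $q_1(\xi)\in H^4(M;\Z)$ is divisible by $2$. The key step is therefore to translate the divisibility condition ``$q_1(\xi)$ divisible by $2$'' into the mod-$2$ condition $w_4(\xi)=0$. I would use the standard fact that the spin characteristic class $q_1$ reduces modulo $2$ to $w_4$: for a spin bundle one has $q_1(\xi)\equiv w_4(\xi)\pmod 2$ under the reduction $H^4(M;\Z)\to H^4(M;\Z/2)$. Consequently $q_1(\xi)$ is divisible by $2$ exactly when its mod-$2$ reduction vanishes, i.e.\ exactly when $w_4(\xi)=0$. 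This gives the stated equivalence.

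The main obstacle I anticipate is justifying that the existence of $4$ independent sections is genuinely equivalent to the $\Sp(1)$-reduction through case~(iii), rather than merely implied by it. One direction is immediate: case~(iii) yields a splitting $\eta\oplus\R^4$ with $\eta$ associated to $A_1$, hence $4$ sections. For the converse, if $\xi\cong\eta\oplus\R^4$ then the $3$-dimensional complement $\eta$ is oriented and spin (as $w_1\xi=w_2\xi=0$ force $w_1\eta=w_2\eta=0$), so its structure group $\SO(3)$ lifts to $\Spin(3)=\Sp(1)$, giving precisely the reduction of type~(iii). Here I would need to check that the spin condition on $\xi$ transfers correctly to $\eta$ and that no further obstruction intervenes in dimension $3$ over a $7$-complex; this is where a small amount of care with the low-dimensional obstruction theory is required, though the dimension count should make it routine.

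Finally I would assemble the chain of equivalences: $\xi$ admits $4$ independent sections $\iff$ $\xi$ reduces to $\Sp(1)$ via $A_1\oplus\R^4$ $\iff$ $q_1(\xi)$ is divisible by $2$ (Theorem~\ref{symplectic}(iii)) $\iff$ $w_4(\xi)=0$. The first and last equivalences rest on the geometric splitting and the $q_1\equiv w_4\pmod 2$ reduction respectively, while the middle one is quoted directly from the theorem.
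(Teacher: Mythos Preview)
Your proposal is correct and follows essentially the same route as the paper's proof: reduce to case~(iii) of Theorem~\ref{symplectic} and use the identity $\rho_2(q_1(\xi))=w_4(\xi)$ to convert divisibility by~$2$ into $w_4(\xi)=0$. The paper's proof is a one-line appeal to exactly these two facts; your only addition is spelling out why having $4$ independent sections is equivalent to the $\Sp(1)$-reduction through $A_1\oplus\R^4$, and your argument for that (that a spin $3$-plane bundle carries a $\Spin(3)=\Sp(1)$ structure acting via the adjoint module $A_1$) is fine and needs no further obstruction theory.
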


\begin{proof}
This follows from case (iii), because $\rho_2(q_1(\xi ))=
w_4 (\xi )$.
\end{proof}

\begin{cor}
For any $7$-dimensional spin bundle $\xi$ over a spin
$7$-manifold $M$, the $8$-dimensional vector bundle 
$\R 1\oplus \xi$ admits the structure of a bundle of Cayley
algebras and, hence, the structure group of $\xi$ reduces
from $\SO (7)$ to $G_2$.
\end{cor}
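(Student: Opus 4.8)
The plan is to observe that the two assertions in the statement are equivalent and then to produce a $G_2$-structure on $\xi$ directly. Recall that $G_2$ is the automorphism group of the octonions $\mathbb{O}$; an automorphism fixes the unit $1$ and preserves the splitting $\mathbb{O}=\R 1\oplus\mathrm{Im}\,\mathbb{O}$, acting on the $7$-dimensional space $\mathrm{Im}\,\mathbb{O}$ as a subgroup of $\SO(7)$. Thus a fibrewise Cayley-algebra structure on $\R 1\oplus\xi$ whose unit is the canonical section of the $\R$-summand is exactly a reduction of the structure group of $\xi=\mathrm{Im}$ from $\SO(7)$ to $G_2$, and conversely a $G_2$-reduction of $\xi$ furnishes the octonion multiplication on $\R 1\oplus\xi$. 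It therefore suffices to exhibit a $G_2$-structure on $\xi$.

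To exploit the spin hypothesis I would pass to $\Spin (7)$. Since $w_1\xi=0$ and $w_2\xi=0$, the $\SO(7)$-frame bundle of $\xi$ lifts to a principal $\Spin (7)$-bundle $P$ over $M$. The embedding $G_2\hookrightarrow\Spin (7)$ arises from the transitive action of $\Spin (7)$ on the unit sphere of the $8$-dimensional real spin representation, whose stabiliser of a unit spinor is $G_2$; comparing dimensions ($21-14=7$) gives the classical identification $\Spin (7)/G_2\cong S^7$. Consequently a $G_2$-reduction refining the chosen spin structure is precisely a section of the associated sphere bundle $P\times_{\Spin (7)}S^7$ over $M$.

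The existence of such a section is then a routine obstruction computation forced by the low dimension of $M$. The fibre $S^7$ is $6$-connected with $\pi_7(S^7)\cong\Z$, and since $\Spin (7)$ is connected the coefficients are untwisted (in any case this is immaterial below). The successive obstructions to a section over $M$ lie in $H^{k+1}(M;\pi_k(S^7))$; these vanish for $k\le 6$ because $\pi_k(S^7)=0$, and the only remaining one lies in $H^8(M;\Z)$, which is zero since $\dim M=7$. Hence a section, and therefore the required $G_2$-structure, exists.

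The only genuinely nontrivial input, and hence the main obstacle, is the homogeneous-space identification $\Spin (7)/G_2\cong S^7$ together with the correct dictionary between fibrewise Cayley-algebra structures and $G_2$-reductions; once these are in place the vanishing of the obstructions is automatic. As a consistency check one can instead work with the $\SO(7)$-bundle and the identification $\SO(7)/G_2\cong\R P^7$: the primary obstruction then lies in $H^2(M;\Z/2)$ and equals $w_2(\xi)$, which vanishes by hypothesis, while the secondary obstruction again lives in $H^8(M;\Z)=0$.
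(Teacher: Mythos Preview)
Your argument is correct and takes a genuinely different route from the paper's. The paper deduces the corollary from case~(ii) of Theorem~\ref{symplectic}: since divisibility of $q_1(\xi)$ by~$1$ is automatic, $\xi$ splits as $\R^3\oplus\mu$ for some (left) $\H$-line bundle $\mu$, and an explicit Cayley multiplication on $\H\oplus\mu\cong\R 1\oplus\xi$ is then written down via the Cayley--Dickson formula
\[
(a,u)\cdot(b,v)=(ab-\langle v,u\rangle,\ \bar a\, v+bu),
\]
using the quaternionic inner product on $\mu$. Thus the paper's proof invokes the machinery developed later (Theorem~\ref{cs}, Proposition~\ref{eta}, Proposition~\ref{spinc7}) and in particular uses that $M$ is a closed spin \emph{manifold}. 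Your obstruction-theoretic argument, resting only on the identification $\Spin(7)/G_2\cong S^7$ and the $6$-connectedness of the fibre, is more elementary and in fact more general: it applies to any spin $7$-plane bundle over an arbitrary $7$-dimensional CW complex, with no hypothesis on $M$ beyond its dimension (the spin hypothesis on $M$ is never used). What the paper's approach buys in return is an \emph{explicit} Cayley-algebra structure, and it ties the corollary directly into the classification of $\Sp(1)$-reductions that is the paper's theme.
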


See \cite[Section 4.4]{S} and the references cited there for the case 
of the tangent bundle.
\begin{proof}
According to case (ii), $\xi$ is isomorphic to $\R^3\oplus \mu$ for
some (left) $\H$-line bundle $\mu$. A Cayley multiplication can then be
written down on $\R 1\oplus\xi = \H \oplus \mu$ using the
quaternionic inner product on $\mu$ satisfying
$\langle au,bv\rangle = a\langle u,v\rangle \bar{b}$:
$$
(a,u)\cdot (b,v) = (ab-\langle v,u\rangle, \bar{a}v+bu),
$$
where $a,\, b\in \H$ and $u,\, v$ are vectors in a fibre of
$\mu$.
There is an associated principal $G_2$-bundle with fibre 
the bundle of algebra isomorphisms from the standard Cayley algebra
(with automorphism group $G_2$) to the fibre of $\H\oplus\mu$.
\end{proof}

\section{The spin characteristic classes}
In this section we recall some standard facts about spin and spin$^c$ vector bundles in low dimensions and their characteristic classes. The inclusion
\begin{equation}\label{iso1}
\SU(\infty) \to \Spin(\infty)
\end{equation}
induces an isomorphism $\pi_i(B\SU(\infty))\to \pi_i(B\Spin(\infty))$ for $i\le 5$. 
So for a spin vector bundle $\xi$ over a manifold $M$
we may define $q_1(\xi)\in H^4(M;\Z)$ to be the characteristic class corresponding in the above isomorphism to the negative of the second Chern class
 $-c_2$. In Section 2 of \cite{CCV1} it was shown that $q_1(\xi)$ is independent of the choice of the spin structure. 

The inclusion (\ref{iso1}) induces the inclusion
\begin{equation*}\label{iso2}
\U(\infty)\simeq \SU(\infty)\times_{\{\pm 1\}}\U(1)\to \Spin(\infty)\times_{\{\pm 1\}} \U(1)=\Spinc(\infty)
\end{equation*}
which yields again an isomorphism  $\pi_i(B\U(\infty))\to \pi_i(B\Spinc(\infty))$ for $i\le 5$.
A vector bundle $\xi$ over $M$ with a fixed $\Spinc(\infty)$-structure has one characteristic class $l\in H^2(M;\ \Z)$ which corresponds to the first Chern class $c_1$ and a second characteristic class $q_1\in H^4(M;\ \Z)$ corresponding to $-c_2$. However, in this case
both classes depends on the choice of the $\Spinc(\infty)$-structures. 

Let $\zeta$ and $\zeta'$ be  two
$\Spinc(\infty)$-structures of the vector bundle $\xi$. Over $M^{(5)}$ they correspond to complex vector bundles. Their difference $\zeta'-\zeta$ considered as a map to
$M\to B\Spinc(\infty)$  lifts to the fiber $B\U(1)$ of the fibration $B\Spinc(\infty)\to
B\SO(\infty)$. The inclusion $i: B\U(1)\to B\Spinc(\infty)$ induces the multiplication by two 
$$H^2(B\Spinc(\infty);\ \Z)\simeq\Z\overset{\times 2}\longrightarrow H^2(B\U(1);\ \Z) \simeq\Z.$$ 
Hence for fixed $\zeta$ and any $m\in H^2(M;\ \Z)$ we can choose $\zeta'$ in such a way that
$c_1(\zeta'-\zeta)=2m$. Since the choice of $\Spinc(\infty)$-structure $\zeta$ is determined by the choice of $c_1(\zeta)=l$, we can now define
$$q_1(\xi;\ l)=-c_2(\zeta).$$
These classes for  different lifts are related by the formula
$$q_1(\xi;\ l+2m)=q_1(\xi;\ l)-2lm-2m^2.$$
Moreover, $2q_1(\xi;l)=p_1(\xi)-l^2$ and $\rho_2(q_1(\xi;l))=w_4(\xi)$.

\section{Spin$^c$ structures on 7-manifolds}
For spin${}^c$ manifolds of dimension $7$ we shall use the following 
general result on manifolds of dimension 
$m \equiv 3\, ({\rm mod}\, 4)$.

\begin{thm}\label{cs} 
Let $M$ be an orientable closed manifold
of dimension $4k+3$.
\begin{enumerate}
\item [(i)] 
Suppose that $\xi$ is a $(4k+3)$-dimensional oriented real vector bundle over $M$ such that $w_2(\xi )=w_2(M)$. Then $\xi$
splits as a direct sum $\xi'\oplus\R$ with 
$e(\xi')=0\in H^{4k+2}(M;\, \Z)$. 
\item [(ii)]
Suppose that $\xi'$ is a $(4k+2)$-dimensional oriented vector bundle 
over $M$ such that
$w_2(\xi' )=w_2(M)$ and $e(\xi' )=0\in H^{4k+2}(M;\, \Z )$.
Then $\xi'$ has a nowhere-zero section.
\item [(iii)]
Suppose that $\xi''$ is a $(4k+1)$-dimensional oriented vector bundle 
over $M$ such that
$w_2(\xi'' )=w_2(M)$ and $e(\xi'')=0\in H^{4k+1}(M;\, \Z )$. 
Then $\xi''$ has a nowhere-zero section.
\end{enumerate}
\end{thm}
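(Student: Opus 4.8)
The plan is to treat all three parts as instances of the single problem of finding a nowhere-zero section (equivalently, extending the obstruction-theoretic section over the top cell), and to exploit the fact that on a closed oriented $M^{4k+3}$ the relevant obstruction groups are concentrated in the top one or two dimensions. Since $\xi,\xi',\xi''$ are oriented ($w_1=0$), all integer coefficient systems $\pi_*(S^\bullet)$ are untwisted, and the fibre spheres are so highly connected that a section automatically exists over the $(4k+1)$-skeleton. The three statements are designed to chain: (i) splits off one trivial summand, (ii) applies to the rank-$(4k+2)$ complement to split off a second, and (iii) to the rank-$(4k+1)$ complement. The recurring mechanism is that the mod $2$ obstructions are governed by a fibre-twisted squaring operation $\Sq{2}+w_2(\cdot)\smile(-)$, which Wu's formula collapses on top-dimensional classes; the hypothesis $w_2(\xi)=w_2(M)$ is exactly what makes the bundle's $w_2$-term cancel the manifold's Wu class $v_2=w_2(M)$ (using $w_1=0$).

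For part (i), the only primary obstruction to a section of $\xi$ is the Euler class $e(\xi)\in H^{4k+3}(M;\Z)\cong\Z$. As $\xi$ has odd rank $4k+3$, one has $2e(\xi)=0$, and since the top group is torsion-free this forces $e(\xi)=0$; hence $\xi\cong\xi'\oplus\R$. It remains to see that the complement satisfies $e(\xi')=0$ in $H^{4k+2}(M;\Z)\cong H_1(M;\Z)$. First I would detect the free part by restricting to codimension-one submanifolds dual to classes in $H^1(M;\Z)$ and reading off the mod $2$ value as $w_{4k+2}(\xi)$, controlled by Wu's formula and the $w_2$-hypothesis. The integral refinement then rests on the freedom in the choice of section: two sections differ by a class in $H^{4k+2}(M;\Z)$, and I would track how $e(\xi')$ changes under this adjustment in order to normalize it to zero.

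For part (ii), with $e(\xi')=0$ a section exists over the $(4k+2)$-skeleton, and the single remaining obstruction lives in $H^{4k+3}(M;\pi_{4k+2}(S^{4k+1}))=H^{4k+3}(M;\Z/2)\cong\Z/2$. This obstruction is the twisted square $\bigl(\Sq{2}+w_2(\xi')\smile(-)\bigr)$ applied to the mod $2$ reduction of the integral class $u\in H^{4k+1}(M;\Z)$ trivializing the Euler class. Because the target is top-dimensional, Wu's formula gives $\Sq{2}(\rho_2 u)=w_2(M)\smile\rho_2 u$, so the obstruction equals $(w_2(M)+w_2(\xi'))\smile\rho_2 u=0$. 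The same computation shows the indeterminacy $\bigl(\Sq{2}+w_2(\xi')\bigr)\rho_2H^{4k+1}(M;\Z)$ vanishes, so the obstruction is both well-defined and zero. For part (iii), the fibre is $S^{4k}$ and there are two higher obstructions, in $H^{4k+2}(M;\Z/2)$ and in $H^{4k+3}(M;\Z/2)$; the top one is killed verbatim as in (ii).

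The \emph{main obstacle} is the remaining obstruction in part (iii), lying in $H^{4k+2}(M;\Z/2)\cong H_1(M;\Z/2)$, which is not top-dimensional, so Wu's formula does not apply directly; the analogous non-top integral step in part (i) is of the same nature. My plan for these is a Poincar\'e-duality reduction to the top case: detect the class by cupping with an arbitrary $\alpha\in H^1(M;\Z/2)$ and evaluating on $[M]$, a perfect pairing on the $\Z/2$-part. Writing the obstruction as $\bigl(\Sq{2}+w_2(\xi'')\smile(-)\bigr)$ of a trivializing class and using the Cartan formula to commute $\Sq{2}$ past $\alpha$ (with $\Sq{2}\alpha=0$ since $\deg\alpha=1$) pushes everything into top degree, where Wu's formula converts $\Sq{2}$ into $w_2(M)\smile(-)$ and the $w_2(\xi'')=w_2(M)$ hypothesis forces cancellation. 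The delicate points I anticipate are the precise identification of the secondary $k$-invariant as the twisted square and the bookkeeping of its indeterminacy, since this twisting is the sole route by which the condition $w_2(\xi)=w_2(M)$ enters the argument.
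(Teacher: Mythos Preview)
Your approach via classical Postnikov obstruction theory and Wu's formula is genuinely different from the paper's, which uses $\Z/2$-equivariant $\KO$-theory in the style of Atiyah--Dupont. The codimension-one-submanifold reduction you propose for the penultimate obstructions in (i) and (iii), and the adjustment of $e(\xi')$ via the difference class in (i), both appear in the paper as well; the divergence is in how the isolated obstructions are actually killed.

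The gap is precisely the one you flag at the end. In (ii) you write the secondary obstruction as $\bigl(\Sq{2}+w_2(\xi')\smile\bigr)\rho_2 u$ for a class $u\in H^{4k+1}(M;\Z)$ parametrizing the partial section. Your Wu computation shows this expression vanishes for \emph{every} $u$ --- but that only proves the \emph{indeterminacy} of the secondary obstruction is zero, not that the obstruction itself is zero. The pullback $\sigma^*(k_2)$ of the relative $k$-invariant may carry a base term, a polynomial in the $w_i(\xi')$ pulled back from $H^{4k+3}(B\SO(4k+2);\Z/2)$, which your formula omits and which the Wu argument does not touch. The same issue infects the $H^{4k+2}(M;\Z/2)$-obstruction in (iii) and the step $w_{4k+2}(\xi)=0$ needed in (i): a recursive Steenrod-algebra computation using only $w_1(\xi)=0$ and $w_2(\xi)=w_2(M)$ does not terminate cleanly for general $k$. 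Pinning down the full $k$-invariants is a genuine computation (this is essentially the content of the classical Postnikov literature such as the reference [T]); it is not a consequence of Wu's formula alone.

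The paper bypasses all of this by never computing a $k$-invariant. The local obstruction groups $\pi_{m-1}(S^{m-2})$, $\pi_{m-2}(S^{m-3})$, $\pi_{m-1}(S^{m-3})$ are all detected isomorphically by the relative $\KO$-Euler class, so the global obstruction is read off from $\gamma(\xi')\in\KO^0(M;-\xi')$ (or its restriction to a codimension-one $N$ dual to $x\in H^1(M;\Z/2)$). The hypothesis $w_2(\xi')=w_2(M)$ supplies a spin structure on $\xi'-\tau M$, hence a Bott isomorphism and a Gysin map $\pi_!$ to a point landing in $\KO^{-m}(*;-(m-1)\R)\cong\Z/2$. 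The single input is that $\gamma(\xi')$ lifts to the $\Z/2$-equivariant Euler class $\gamma(L\otimes\xi')$ (with $L$ the sign representation), whose push-forward lies in $\KO^{-m}_{\Z/2}(*;-(m-1)L)=0$ for $m\equiv 3\pmod 4$; commutativity of the restriction square then forces the obstruction in $\Z/2$ to vanish. This uniform mechanism handles all three parts without any identification of secondary operations.
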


Most of this may be found in \cite{CS}; see also \cite{T}.
For the sake of completeness we include a proof using the
$K$-theoretic methods introduced by Atiyah and Dupont \cite{AD}
in Section 6.

Using the  properties of the spin$^c$ characteristic class $q_1$ we apply   
Theorem \ref{cs} to a connected closed spin$^c$ manifold $M$ 
of dimension 7. If $\xi$ is an oriented $7$-dimensional vector bundle 
over $M$ with $w_2(\xi)=w_2(M)=\rho_2 l$, then 
$$\delta^*w_4(\xi)=\delta^*\rho_2q_1(\xi;l)=0,$$
where $\delta^*:H^4(M;\, \Z/2)\to H^5(M;\, \Z)$ is the Bockstein homomorphism.
By parts (i) and (ii), $\xi$ splits as $\xi''\oplus \R^2$, and then we can apply part (iii) to $\xi''$, because $e(\xi'')=\delta^*w_4(\xi'')=0$.
So $\xi$ has three linearly independent
cross sections, that is the structure group of $\xi$ reduces to
$\SO(4)<\SO(7)$.
In fact, under the same assumptions more is true.

\begin{prop}\label{eta}
Let $\xi$ be a $7$-dimensional vector bundle over a closed, connected,
smooth, spin$^c$ $7$-manifold $M$.
If $w_2(\xi)=w_2(M)=\rho_2 l$ for an  $l\in H^2(M;\Z)$, then there is
a $2$-dimensional complex bundle $\eta$ over $M$ with $c_1(\eta)=l$ such that
$\xi\cong\eta\oplus \R^3$. Moreover, $c_2(\eta)=-q_1(\xi;\ l)$.
\end{prop}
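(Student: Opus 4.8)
The plan is to reduce the statement to the existence of a suitable complex structure on a rank-$4$ summand of $\xi$ and then to read off the Chern classes from the identities of Section 2. Since $w_2(\xi)=w_2(M)$, the discussion following Theorem \ref{cs} already gives $\xi\cong\zeta\oplus\R^3$ for an oriented rank-$4$ real bundle $\zeta$; adding trivial summands changes neither Stiefel--Whitney nor Pontryagin classes, so $w_2(\zeta)=\rho_2 l$ and $p_1(\zeta)=p_1(\xi)$. It therefore suffices to equip $\zeta$ with a complex structure $\eta$ compatible with its orientation and with $c_1(\eta)=l$; then $\eta\oplus\R^3\cong\xi$, and only the value of $c_2(\eta)$ remains to be identified.

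For the existence of the complex structure I would use the identification $\SO(4)/\U(2)\cong S^2$, realised concretely as the unit sphere bundle of the rank-$3$ bundle $\Lambda^2_+\zeta$ of self-dual $2$-vectors: a complex structure on $\zeta$ compatible with the orientation is exactly a nowhere-zero section $s$ of $\Lambda^2_+\zeta$, and the splitting $\Lambda^2_+\zeta\cong\R s\oplus s^{\perp}$ identifies the first Chern class of the resulting complex bundle with the Euler class $e(s^{\perp})$. The problem thus becomes an obstruction-theoretic lifting problem with fibre $S^2$, in which the section over the $2$-skeleton fixes $c_1$; the hypothesis $w_2(\zeta)=\rho_2 l$ is precisely what allows this choice to be made with $c_1=l$. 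Moreover the spin$^c$ condition $w_2(\xi)=w_2(M)=\rho_2 l$ supplies a stable complex structure on $\xi$ whose restriction to $M^{(5)}$ is a complex bundle with $c_1=l$ and $c_2=-q_1(\xi;l)$; in the metastable range this furnishes a lift over the $5$-skeleton and pins down the section there.

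The main obstacle is extending the section over the $6$- and $7$-cells: the remaining obstructions live in $H^6(M;\pi_5(S^2))=H^6(M;\Z/2)$ and $H^7(M;\pi_6(S^2))=H^7(M;\Z/12)$. I would attack these by exploiting the indeterminacy coming from redefining the section over the lower cells (a difference measured in $H^5(M;\Z/2)$), showing that the degree-$6$ obstruction is hit by the appropriate primary cohomology operation and that the top obstruction in $H^7$ is killed under the present hypotheses, where Poincar\'e duality on the closed oriented $7$-manifold $M$ and the characteristic-class relations of Section 2 are the relevant input. This is the only genuinely delicate point; everything else is formal.

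Finally I would identify the second Chern class. Since $\eta$ is arranged to agree with the spin$^c$ complex bundle over $M^{(5)}$ and the restriction $H^4(M;\Z)\to H^4(M^{(5)};\Z)$ is an isomorphism, one gets $c_2(\eta)=-q_1(\xi;l)$ directly. As a consistency check, $p_1(\zeta)=c_1(\eta)^2-2c_2(\eta)=l^2-2c_2(\eta)$ together with $2q_1(\xi;l)=p_1(\xi)-l^2=p_1(\zeta)-l^2$ gives $2\bigl(c_2(\eta)+q_1(\xi;l)\bigr)=0$, while $\rho_2 c_2(\eta)=w_4(\zeta)=\rho_2 q_1(\xi;l)$, in agreement with the exact identity obtained over the $5$-skeleton.
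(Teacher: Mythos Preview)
Your approach differs from the paper's and has a genuine gap. You first split $\xi\cong\zeta\oplus\R^3$ with $\zeta$ an $\SO(4)$-bundle and then attempt to lift $\zeta$ to $\U(2)$ through the fibre $\SO(4)/\U(2)\cong S^2$. The difficulty is twofold. First, the particular $\zeta$ produced by Theorem~\ref{cs} has no control on its Euler class $e(\zeta)\in H^4(M;\Z)$; but a complex structure with $c_1=l$ forces $e(\zeta)=c_2=-q_1(\xi;l)$, so for the wrong choice of splitting no such complex structure exists even over the $4$-skeleton. Your appeal to ``a stable complex structure on $\xi$ over $M^{(5)}$'' does not resolve this: a stable complex structure on $\xi$ does not restrict to an honest complex structure on your fixed rank-$4$ summand $\zeta$, and in particular does not pin down $e(\zeta)$. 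Second, even granting the correct Euler class, you still face obstructions in $H^6(M;\pi_5(S^2))$ and $H^7(M;\pi_6(S^2))$ which you acknowledge but do not compute; the vague reference to indeterminacy and Poincar\'e duality is not a proof.

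The paper sidesteps both problems with a single observation: instead of reducing all the way to $\SO(4)$, stop at $\Spinc(5)$ (Theorem~\ref{cs}\,(i),(ii) suffice for this) and then use that the inclusion $\U(2)=\Sp(1)\times_{\{\pm1\}}\U(1)\hookrightarrow \Sp(2)\times_{\{\pm1\}}\U(1)=\Spinc(5)$ is a $6$-equivalence, since $\Sp(2)/\Sp(1)\cong S^7$. Over a $7$-complex this makes the lift to $\U(2)$ automatic, with no higher obstruction analysis required. The freedom in choosing the $\Spinc$-lift then allows $c_1(\eta)=l$, and $c_2(\eta)=-q_1(\xi;l)$ is immediate from the definition of $q_1$. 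The moral is that reducing by one dimension fewer buys you a fibre $S^7$ in place of $S^2$, converting a delicate obstruction problem into a triviality.
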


\begin{proof}
The inclusion
$$\SU(2)\cong\Sp(1)=\Sp(1)\times 1\hookrightarrow \Sp(1)\times\Sp(1)\cong
\Spin(4)\hookrightarrow \Spin(5)$$
induces an isomorphism on homotopy groups $\pi_i$ for $i\le 5$ and an
epimorphism on $\pi_6$.   Indeed, it can be identified with the inclusion $\Sp(1) \hookrightarrow \Sp(2)$, $g\mapsto \begin{pmatrix}g&0\\0&1\end{pmatrix}$, with quotient $S^7$. The same applies to the induced inclusion
$$\U(2)\cong \SU(2)\times_{\{\pm 1\}} \U(1)\hookrightarrow
\Spin(5)\times_{\{\pm 1\}}\U(1)=\Spinc(5).$$
By (i) and (ii) of Theorem \ref{cs} the structure
group $\Spinc(7)$ of $\xi$ can be reduced to $\Spinc(5)$. Since the
inclusion $\U(2)\hookrightarrow \Spinc(5)$ is a $6$-equivalence there is a
$2$-dimensional complex bundle $\eta$ such that 
$\eta\oplus\R^3\cong\xi$. 
In the previous section it was shown that the $\Spinc(5)$-structure of $\xi$ can 
be chosen in such a way that $c_1(\eta)=l$ since $w_2(\xi)=\rho_2l$. Then according to the definition of the spin$^c$ characteristic class $c_2(\eta)=-q_1(\xi;\ l)$.
\end{proof}

Notice that the above inclusion $\U(2)\hookrightarrow
\Spinc(5)\hookrightarrow \Spinc(7)$ is a lift of the standard inclusion
$$\U(2)\hookrightarrow\SO(4)\hookrightarrow\SO(7).$$ 

\begin{remark}
Let us recall from \cite{CCV1} the notion of $\Hl$-bundle for a complex line 
bundle $\lambda$. This is a complex vector bundle which is a left module over 
the bundle $\Hl=\C\oplus\lambda$ of quaternion algebras. The structure groups
for $\Hl$-bundles are $\Sp(n)\times_{\{\pm 1\}}U(1)$. Since $\U(2)\cong
\SU(2)\times_{\{\pm 1\}}\U(1)\cong \Sp(1)\times_{\{\pm 1\}}\U(1)$ every 
$2$-dimensional complex bundle $\eta$ is naturally an $\Hl$-line bundle, where $\lambda$ is the determinant bundle $\Lambda^2(\eta)$ (so that 
$c_1(\lambda)=c_1(\eta)$). 
Hence Proposition \ref{eta} can be read in the following way:

\smallskip

{\it Let $\lambda$ be a complex line bundle over $M$ with $c_1(\lambda)=l$. Suppose that $w_1(\xi)=0=w_1(M)$ and $w_2(\xi)=\rho_2(l)=w_2(M)$. Then there is an $\Hl$-line bundle
$\eta$  with the Euler class $e(\eta)=-q_1(\xi;\ l)$ such that $\xi$ is isomorphic to $\eta\oplus\R^3$.}
\end{remark}

The crucial role for our obstruction theory on $7$-manifolds is played
by Propositions \ref{spinc7} and \ref{u2}. The first completes the characterization of $7$-dimensional vector bundles by characteristic classes in \cite{W}. It has been already
used in \cite{S} to obtain results on the existence of multisymplectic $3$-forms on $7$-dimensional manifolds.

\begin{prop}\label{spinc7}
Suppose that $M$ is a $7$-dimensional connected closed manifold (not necessarily 
spin$^c$). Consider two orientable real $7$-dimensional vector bundles $\xi$ and
$\xi'$ with $w_2(\xi)=w_2(\xi')=\rho_2(l)$ for some $l\in H^2(M;\Z)$.
Then $\xi$ and $\xi'$ are isomorphic if and only if
$q_1(\xi;l)=q_1(\xi';l)$. 
\end{prop}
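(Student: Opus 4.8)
The plan is to classify 7-dimensional oriented vector bundles with a fixed $w_2 = \rho_2(l)$ over a 7-complex using a single primary obstruction together with a careful analysis of the choices. Since $M$ is a closed 7-manifold, I may work with its CW structure and it suffices to compare $\xi$ and $\xi'$ cell-by-cell up to dimension $7$. The natural framework is to give $\xi$ and $\xi'$ a common $\Spinc$-type refinement: choosing $l$ fixes a $\Spinc$-structure on each (up to the torsion analysed in Section 2), and then by Proposition \ref{eta} (applied with the hypothesis $w_2(\xi)=\rho_2 l$) each bundle splits as $\eta\oplus\R^3$ and $\eta'\oplus\R^3$ for $2$-dimensional complex bundles $\eta,\eta'$ with $c_1(\eta)=c_1(\eta')=l$. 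Because stabilising by $\R^3$ does not change the isomorphism type question in this range (a $7$-bundle and its stabilisation agree once we are in the metastable range, and the $\R^3$ summands can be matched), the problem reduces to classifying $\U(2)$-bundles $\eta$ over $M$ with prescribed $c_1=l$.

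The key point is then that $\U(2)$-bundles over a $7$-complex with fixed $c_1$ are classified up to isomorphism by their second Chern class $c_2\in H^4(M;\Z)$. To see this I would use the fibration $B\SU(2)\to B\U(2)\to B\U(1)$, or more directly the observation that $B\U(2)$ has no homotopy in the relevant odd degrees between $4$ and $7$ to support secondary obstructions: $\pi_i(B\U(2))$ is $\Z$ for $i=2$, $\Z$ for $i=4$, and $\pi_5(B\U(2))=\pi_4(\U(2))=0$, $\pi_6(B\U(2))=\pi_5(\U(2))=\Z$, $\pi_7(B\U(2))=\pi_6(\U(2))=\Z/2$. The map $[M,B\U(2)]\to H^2\times H^4$ sending $\eta\mapsto(c_1,c_2)$ is what I want to show is injective on the fibre over a fixed $c_1=l$. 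By the computation above, once $c_1$ and $c_2$ agree, the difference of classifying maps is controlled by the higher homotopy $\pi_6$ and $\pi_7$ of $B\U(2)$, and I would check that over a $7$-dimensional base these potential secondary differences either vanish or are absorbed by the stabilisation to $\xi=\eta\oplus\R^3$. Concretely, the inclusion $\U(2)\hookrightarrow\Spinc(7)$ is a $6$-equivalence (as noted after Proposition \ref{eta}), so two $\U(2)$-bundles inducing the same map into $B\Spinc(7)$ up to the $6$-skeleton become isomorphic as stabilised real bundles; the only possible discrepancy lives in the top cell $H^7$, where an obstruction in $\pi_7$ could in principle appear.

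The main obstacle, therefore, is exactly this top-cell term: I must show that the potential secondary obstruction in $H^7(M;\pi_7)$ (coming from $\pi_6(\U(2))=\Z/2$ governing the difference over the $7$-cell) does not obstruct the isomorphism, i.e. that the two bundles genuinely agree and not merely up to the $6$-skeleton. My strategy is to exploit that we are comparing \emph{two} genuine bundles with the same $q_1$ and the same $w_2$: the difference class lives in the image of a coboundary that can be killed by acting with an automorphism of $\R^3$-stabilisation, or equivalently by the freedom in the $\Spinc$-structure described in Section 2 (the formula $q_1(\xi;l+2m)=q_1(\xi;l)-2lm-2m^2$ shows how adjusting the lift moves $q_1$, and fixing $q_1(\xi;l)=q_1(\xi';l)$ pins the relevant data). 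I would translate the top-cell obstruction into an element detected by a characteristic number and verify, using $w_2(\xi)=w_2(\xi')$ and $q_1(\xi;l)=q_1(\xi';l)$ together with the relation $\rho_2 q_1=w_4$, that this number vanishes. Once the primary class $q_1$ and the mod-$2$ data $w_4$ coincide, there is no room left for a nontrivial secondary invariant on a $7$-manifold, and the two bundles are isomorphic; the converse (isomorphic bundles have equal $q_1$) is immediate from naturality of the characteristic class.
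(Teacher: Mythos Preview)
Your approach fails at the first step. You invoke Proposition~\ref{eta} to split $\xi$ and $\xi'$ as $\eta\oplus\R^3$ and $\eta'\oplus\R^3$, but that proposition requires $M$ to be spin$^c$ \emph{and} $w_2(\xi)=w_2(M)$. Proposition~\ref{spinc7} is explicitly stated for $M$ ``not necessarily spin$^c$'' and assumes only $w_2(\xi)=w_2(\xi')=\rho_2(l)$, with no relation to $w_2(M)$; so Proposition~\ref{eta} is simply unavailable here and the reduction to $\U(2)$-bundles does not go through. Even granting that reduction, your homotopy calculations are wrong: since $\U(2)\simeq S^1\times S^3$ as a space, one has $\pi_4(\U(2))=\Z/2$ (not $0$), $\pi_5(\U(2))=\Z/2$ (not $\Z$), and $\pi_6(\U(2))=\Z/12$ (not $\Z/2$). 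With the correct groups there are genuine secondary obstructions already over the $5$-skeleton, so $(c_1,c_2)$ does not classify $\U(2)$-bundles over a $7$-complex, and your top-cell discussion---which in any case never produces an actual vanishing argument---cannot be repaired along these lines.

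The paper's argument avoids both problems by staying on the $\SO$ side. Equality of $q_1(\,\cdot\,;l)$ forces $\xi$ and $\xi'$ to be \emph{stably} isomorphic over the $6$-skeleton (this is the content of Proposition~2.5 of \cite{CCV1}, ultimately because $B\U(\infty)\to B\Spinc(\infty)$ is a $5$-equivalence). Since $B\SO(7)\hookrightarrow B\SO(\infty)$ induces isomorphisms on $\pi_i$ for $i\le 7$, stable isomorphism over $M^{(6)}$ upgrades to an unstable isomorphism there; and since $\pi_6(\SO(7))=0$, that isomorphism extends over the $7$-cells. No hypothesis on $w_2(M)$ and no $\U(2)$-analysis is needed.
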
 

\begin{proof} Suppose that $q_1(\xi;l)=q_1(\xi';l)$. Then the proof of Proposition 2.5 in \cite{CCV1} shows that
the restrictions of $\xi$ and $\xi'$  to the $6$-skeleton $M^6$  of the manifold $M$ are stably isomorphic. Since the inclusion $B\SO(7)\hookrightarrow B\SO(\infty)$ induces isomorphisms $\pi_i(B\SO(7))\to \pi_i(B\SO(\infty))$ for $i\le 7$, the vector bundles $\xi$ and $\xi'$ are also unstably isomorphic over $M^6$. The vanishing of $\pi_6(\SO(7))$ says that there is no obstruction to extending an isomorphism over the whole manifold $M$.
\end{proof}

\begin{prop}\label{u2} Suppose that  $M$ is a connected closed smooth spin$^c$
$7$-manifold with $w_2(M)=\rho_2(l)$ for some $l\in H^2(M;\Z)$. Let $u\in
H^4(M;\Z)$. Then there is a $2$-dimensional complex bundle $\eta$ such that
$$c_1(\eta)=l,\quad c_2(\eta)=u.$$ 
If $\lambda$ is a complex line bundle over $M$ with $c_1(\lambda)=l$, the complex bundle $\eta$ admits the structure of an $\Hl$-line bundle with Euler class $e(\eta)=u$.
\end{prop}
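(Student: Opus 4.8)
The plan is to construct $\eta$ directly as a map $M\to B\U(2)$ realising $c_1=l$ and $c_2=u$, and then to read off the $\Hl$-structure from the Remark following Proposition~\ref{eta}. (By Proposition~\ref{eta} this is equivalent to realising an arbitrary spin$^c$ class $q_1=-u$ on a real $7$-bundle with $w_2=\rho_2 l$, but the two problems carry the same obstructions, so I work with $B\U(2)$.) Recall that $B\U(2)$ is simply connected with $\pi_2=\Z$ (carrying $c_1$), $\pi_3=0$, $\pi_4=\Z$ (carrying $c_2$), $\pi_5=\pi_6=\Z/2$ and $\pi_7=\Z/12$, and that its fourth Postnikov stage is $K(\Z,2)\times K(\Z,4)$, the two factors being $c_1$ and $c_2$. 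Since $\dim M=7$ I first realise the pair $(l,u)$ by a map $f\colon M\to K(\Z,2)\times K(\Z,4)$ and then lift $f$ through the remaining stages of the tower. The possible obstructions then lie in $H^6(M;\Z/2)$ (fibre $K(\Z/2,5)$), in $H^7(M;\Z/2)$ (fibre $K(\Z/2,6)$) and in $H^8(M;\Z/12)=0$ (fibre $K(\Z/12,7)$); the last vanishes for dimensional reasons, so everything comes down to the two mod~$2$ classes.

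The primary obstruction $\theta_5\in H^6(M;\Z/2)$ is the pull-back of the first $k$-invariant of $B\U(2)$. Using $\Sq{2}w_4=w_2w_4+w_6$ together with $w_6=0$ in $H^*(B\U(2);\Z/2)$, that $k$-invariant is $\Sq{2}\iota_4+\iota_2\iota_4$, so $\theta_5=\Sq{2}\rho_2 u+\rho_2 l\cup\rho_2 u$. I would show $\theta_5=0$ by pairing against $H^1$: for $a\in H^1(M;\Z/2)$ the Cartan formula together with $\Sq{1}\rho_2 u=0$ (valid since $u$ is integral) gives $\Sq{2}(\rho_2 u\cup a)=\Sq{2}\rho_2 u\cup a$, so Wu's formula in the top degree yields $\langle \Sq{2}\rho_2 u\cup a,[M]\rangle=\langle v_2\cup\rho_2 u\cup a,[M]\rangle$. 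Here $v_2=w_2(M)=\rho_2 l$ by hypothesis, so the two summands of $\langle\theta_5\cup a,[M]\rangle$ cancel; nondegeneracy of Poincar\'e duality then forces $\theta_5=0$. This is exactly the step where the assumption $w_2(M)=\rho_2 l$ is used, and it fails without it.

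The main obstacle is the top obstruction $\theta_6\in H^7(M;\Z/2)\cong\Z/2$ to lifting over the fibre $K(\Z/2,6)$. I would identify the relevant $k$-invariant; its pull-back is a sum of terms of the shape $\Sq{2}(\,\cdot\,)$ and $\Sq{3}\rho_2 u=\Sq{1}\Sq{2}\rho_2 u$ applied to classes of lower degree. When $M$ is spin, all Wu classes $v_i$ ($i\ge1$) vanish, so every such term pairs trivially with $[M]$ and $\theta_6=0$ automatically. In general I would remove $\theta_6$ using the indeterminacy in the lift to the stage $K(\Z/2,5)$: changing that lift by $\gamma\in H^5(M;\Z/2)$ alters $\theta_6$ by $\Sq{2}\gamma$, and Wu's formula in the top degree gives $\Sq{2}\gamma=v_2\cup\gamma=\rho_2 l\cup\gamma$; when $\rho_2 l\ne0$, Poincar\'e duality produces $\gamma$ with $\langle\rho_2 l\cup\gamma,[M]\rangle\ne0$, so $\theta_6$ can be killed. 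Checking that the fibre restriction of this $k$-invariant is precisely $\Sq{2}\iota_5$ (hence the indeterminacy is $\Sq{2}\gamma$) is the delicate point, best handled by comparison with $B\SU(2)=\H P^{\infty}$, where the passage $\pi_5\to\pi_6$ is detected by the Hopf map and hence by $\Sq{2}$. Once $\theta_6$ is arranged to vanish the lift extends over $K(\Z/12,7)$ and, as $\dim M=7$, all further obstructions vanish, producing $\eta$ with $c_1(\eta)=l$ and $c_2(\eta)=u$.

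Finally, for the $\Hl$-structure I would invoke the Remark after Proposition~\ref{eta}: every $2$-dimensional complex bundle $\eta$ is naturally an $\H_{\det\eta}$-line bundle for $\det\eta=\Lambda^2\eta$. Since $c_1(\det\eta)=c_1(\eta)=l=c_1(\lambda)$ and complex line bundles are classified by their first Chern class, $\det\eta\cong\lambda$; hence $\eta$ is an $\Hl$-line bundle. Its Euler class is $e(\eta)=c_2(\eta)=u$, as required.
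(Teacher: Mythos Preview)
Your approach is genuinely different from both of the paper's proofs, and it is essentially correct, though the treatment of the top obstruction is sketched rather than carried out.

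The paper's primary proof does \emph{not} climb the Postnikov tower of $B\U(2)$. Instead it first constructs a real $7$-bundle $\xi$ with $w_2(\xi)=\rho_2 l$ and $q_1(\xi;l)=-u$ and then invokes Proposition~\ref{eta} to produce $\eta$. The construction of $\xi$ is almost trivial: over the $5$-skeleton one takes $\mu\oplus\lambda\oplus\R$ with $\lambda$ a line bundle realising $l$ and $\mu$ an $\H$-line bundle realising $u$ (possible because $e\colon B\SU(2)\to K(\Z,4)$ is a $5$-equivalence), and this extends to $M$ stably since $\pi_5(B\O)=\pi_6(B\O)=0$. All the analytic weight is therefore carried by Proposition~\ref{eta}, which in turn rests on the $KO_{\Z/2}$-theoretic Theorem~\ref{cs}. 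The paper also gives a second proof via Proposition~\ref{u3}: one checks $\Sq{2}\rho_2 u=\rho_2(lu)$ (exactly your $\theta_5$ computation), realises $(l,u,0)$ as the Chern classes of a $\U(3)$-bundle $\beta$, and then splits $\beta=\C\oplus\eta$ using Theorem~\ref{cs}(ii). So both of the paper's arguments trade the secondary obstruction you call $\theta_6$ for an appeal to Theorem~\ref{cs}.

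Your route avoids Theorem~\ref{cs} entirely, which is attractive, but the price is the $\theta_6$ analysis. Your handling of $\theta_5$ is correct and coincides with the paper's alternative proof. For $\theta_6$ your plan is sound: in the non-spin case the indeterminacy $\Sq{2}H^5(M;\Z/2)=v_2\!\cdot\! H^5(M;\Z/2)$ is all of $H^7(M;\Z/2)\cong\Z/2$ by Poincar\'e duality (once you have checked, as you indicate, that the fibre restriction of the $k$-invariant is $\Sq{2}\iota_5$, which follows from $\eta\cdot\colon\pi_4(S^3)\to\pi_5(S^3)$ being an isomorphism). In the spin case your assertion that $\theta_6=0$ automatically is correct but deserves a line of justification: every monomial in $H^7(P_5;\Z/2)$ either involves $\Sq{1}\iota_2$ or $\Sq{1}\iota_4$ (hence pulls back to $0$ by integrality of $l,u$), or involves $\iota_2$ as a factor (hence vanishes since $\rho_2 l=w_2(M)=0$), or is one of $\Sq{2}\iota_5$, $\Sq{3}\iota_4$, both of which vanish on a spin $7$-manifold by Wu's formula because $v_2=v_3=0$. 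With that enumeration your argument is complete; without it the spin case is a genuine gap.

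Your final paragraph on the $\Hl$-structure is fine and matches the paper's remark.
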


\begin{proof} 
We construct a 7-dimensional vector bundle $\xi$ with $w_1(\xi)=0$,
$w_2(\xi)=\rho_2(l)$ and $q_1(\xi;l)=-u$. Then we can apply Proposition 
\ref{eta} to obtain $\eta$ with the prescribed properties.

If we can construct $\xi$ over the 5-skeleton $M^{(5)}$ we are done, because
$\pi_5(BO(\infty))=0$ and $\pi_6(BO(\infty))=0$, so that any stable bundle
over $M^{(5)}$ can be extended over $M$. And, of course, any stable bundle
over a 7-manifold can be reduced to dimension 7.

On $M^{(5)}$ we can take $\mu\oplus\lambda\oplus\R$ where $\lambda$ is a
complex line bundle with $c_1(\lambda)=l$ and $\mu$ is an $\H$-line
bundle such that $e(\mu)=u$. The existence of $\mu$ over $M^{(5)}$ follows
from the fact that the Euler class $e:B\SU(2)\to K(\Z,4)$ induces isomorphism on $\pi_i$ for 
$i\le 4$ and an epimorphism for $i=5$. (Notice that $\mu$ on $M^{(5)}$ is a
$2$-dimensional complex bundle with $c_1(\mu)=0$ and $c_2(\mu)=e(\mu)=u$.) 
Then $q_1(\xi;\ l)=-c_2(\mu\oplus\lambda)=-c_2(\mu)=-u$.
\end{proof}

We add also a characterization of three-dimensional complex vector bundles over manifolds of dimension less or equal to $7$. To keep the notation from \cite{CCV1},
we use the symbol $[X,Y]$ for pointed homotopy classes of maps from $X$ to $Y$ (although in our case the sets of pointed and unpointed homotopy classes are the same). In Section 2 of \cite{CCV1}, it has been shown:

\begin{prop}\label{u3} Let $M$ be a manifold of dimension $\le 7$. The image of the mapping determined by the first three Chern classes
$$(c_1,c_2,c_3): [M_+;\ B\U(3)]\to H^2(M;\ \Z)\oplus H^4(M;\ \Z)\oplus H^6(M;\ \Z)$$
is the set $\{(l,u,v)\ |\ Sq^2\rho_2(u)+\rho_2(lu)=\rho_2(v)\}$.
\end{prop}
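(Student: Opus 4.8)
The plan is to prove the two inclusions separately: necessity of the congruence is a Wu-formula computation, while sufficiency is an obstruction-theoretic construction built on the low Postnikov tower of $B\U(3)$. Nothing below uses the manifold structure beyond $\dim M\le 7$, so the argument applies to any CW-complex of dimension $\le 7$.

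For necessity I would pass to the underlying real bundle of a $\U(3)$-bundle $\eta$ and use $\rho_2 c_i(\eta)=w_{2i}$ together with the vanishing of the odd Stiefel--Whitney classes of a complex bundle. The Wu formula gives $\Sq{2}w_4=w_2w_4+w_6$ (the term $w_1w_5$ carries coefficient $2$ and drops out mod $2$), and substituting $w_2=\rho_2 l$, $w_4=\rho_2 u$, $w_6=\rho_2 v$ yields exactly $\Sq{2}\rho_2(u)+\rho_2(lu)=\rho_2(v)$. This shows the image lies in the stated set and, crucially for the converse, that the congruence holds for \emph{every} $\U(3)$-bundle.

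For sufficiency, fix a triple $(l,u,v)$ obeying the congruence. Through dimension $7$ the homotopy of $B\U(3)$ is $\pi_2=\pi_4=\pi_6=\Z$ (detected by $c_1,c_2,c_3$) and $\pi_7=\Z/6$, with $\pi_3=\pi_5=0$. Since $H^5(K(\Z,2);\Z)=0$, the fifth Postnikov stage is the product $P_5=K(\Z,2)\times K(\Z,4)$, so there is a map $g\colon M\to P_5$ with $g^*\iota_2=l$ and $g^*\iota_4=u$. The next stage is a principal fibration $K(\Z,6)\to P_6\to P_5$ whose $k$-invariant lies in $H^7(K(\Z,2)\times K(\Z,4);\Z)$; a short calculation (Künneth, using $H^6(K(\Z,4);\Z)=0$ and $H^7(K(\Z,4);\Z)=\Z/2$) identifies this group with $\Z/2$ generated by $\delta\Sq{2}\rho_2(c_2)$, the integral Bockstein of $\Sq{2}\iota_4$, and shows the $k$-invariant is this nonzero generator. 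The obstruction to lifting $g$ to $P_6$ is therefore $\delta\Sq{2}\rho_2(u)\in H^7(M;\Z)$, and here the hypothesis enters: the congruence rewrites $\Sq{2}\rho_2(u)=\rho_2(v+lu)$, so $\Sq{2}\rho_2(u)$ lies in the image of $\rho_2$, hence in $\ker\delta$ by exactness of the Bockstein sequence, and the obstruction vanishes. A lift $\tilde g\colon M\to P_6$ thus exists; the remaining stages ($P_6\to P_7$, with obstruction in $H^8(M;\Z/6)=0$, and $P_7\to B\U(3)$, an equivalence in this range) cost nothing because $\dim M\le 7$. This produces a genuine $\U(3)$-bundle with $c_1=l$, $c_2=u$, and some $c_3=v_0$.

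It remains to correct $v_0$ to $v$. By necessity applied to the bundle just built, $\rho_2(v_0)=\Sq{2}\rho_2(u)+\rho_2(lu)=\rho_2(v)$, so $v-v_0\in 2H^6(M;\Z)$. The lifts of $g$ to $P_6$ form a torsor under $H^6(M;\Z)$, and changing the lift by $\alpha$ changes $c_3$ by $(c_3|_{\mathrm{fibre}})\,\alpha$. The main technical point is to show that $c_3$ restricts on the fibre $K(\Z,6)$ to \emph{twice} the fundamental class: since the transgression $d_7(\iota_6)$ equals the order-$2$ class $k^7$, only $2\iota_6$ survives in the Serre spectral sequence, forcing $c_3|_{\mathrm{fibre}}=2\iota_6$. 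Consequently the achievable values of $c_3$ form the coset $v_0+2H^6(M;\Z)$, and choosing $\alpha$ with $2\alpha=v-v_0$ gives a lift (re-lifted to $B\U(3)$ as before, leaving $c_1,c_2$ untouched) realizing $c_3=v$. I expect this last identification --- that $c_3$ is twice the fibre generator, so the lift-changing action hits exactly $2H^6(M;\Z)$ --- to be the principal obstacle, the rest being routine once the $k$-invariant $\delta\Sq{2}\rho_2(c_2)$ has been computed.
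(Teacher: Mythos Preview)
The paper does not give its own proof here; the sentence introducing the proposition reads ``In Section 2 of \cite{CCV1}, it has been shown:'' and the statement is simply quoted. Your Postnikov-tower argument is a correct and self-contained proof, presumably close in spirit to what is done in \cite{CCV1}.

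Two small points are left implicit and deserve one extra line each. First, you assert via ``a short calculation'' that the $k$-invariant $k^7\in H^7(K(\Z ,2)\times K(\Z ,4);\,\Z )\cong\Z /2$ is the nonzero element, but the calculation is not written out. The quickest justification is to feed the necessity direction back in: on $S^6$ one has $l=u=0$, so the Wu relation forces every $\U (3)$-bundle to have $c_3$ even; equivalently the generator of $\pi_6(B\U (3))$ has $c_3=\pm 2$ (also visible from the Chern character, since $\ch_3=\tfrac12 c_3$ when $c_1=c_2=0$). This shows that the restriction $H^6(P_6)\to H^6(K(\Z ,6))=\Z$ has image $2\Z$, so the transgression $d_7(\iota_6)=k^7$ is nonzero and equals $\delta^*\Sq{2}\rho_2(\iota_4)$. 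Second, in your last paragraph the step from $E_\infty^{0,6}=2\Z$ to $c_3|_{K(\Z ,6)}=\pm 2\iota_6$ needs the (trivial) remark that $c_1^3$ and $c_1c_2$ are pulled back from $P_5$ and hence restrict to $0$ on the fibre, so that $c_3$ alone must hit a generator of the image $2\Z$. With these details supplied, your argument is complete.
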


This Proposition can be also used to give an alternative proof of Proposition \ref{u2}.

\begin{proof} [Another proof of Proposition \ref{u2}] To prove Proposition \ref{u2} from Proposition \ref{u3} we show first that $l\in H^2(M;\ \Z)$, $u\in H^4(M;\ \Z)$ and $0\in H^6(M;\ \Z)$ are the Chern classes of a three-dimensional complex vector bundle, $\beta$ say, over $M$. For every $x\in H^1(M;\ \Z/2)$ we have
$$xSq^2\rho_2u=Sq^2(x\rho_2(u))=w_2(M)(x\rho_2(u))=x\rho_2(lu),$$
which gives that $Sq^2\rho_2u=\rho_2(lu)$. By part (ii) of Theorem \ref{cs}, $\beta$ has a nowhere-zero section and so splits as a direct sum $\beta=\C\oplus\eta$, where $\eta$ is a two-dimensional complex vector bundle.
\end{proof}

\section{Representations of $\U(2)$ in dimension $7$}

Let $E$ be the standard $2$-dimensional complex representation of $\U(2)$ and write
$L=\Lambda^2E$. Then the irreducible complex representations  of $\U(2)$ can be listed as
$$V_{j,k}=S^jE\otimes_{\C} L^k, \quad j\ge 0,\ k\in\Z,$$
where $S^j$ is the $j$th symmetric power. The representation is real if and only if $j=2i$
is even and $k=-i$. In that case we write
$$V_{2i,-i}=\C\otimes A_i,\quad i\ge 0. $$
Notice that $\dim A_i=2i+1$ and that the centre $\mathbb T$ of $\U(2)$ acts trivially on $A_i$, so that $A_i$ is a module over $\U(2)/\mathbb T=\SO(3)$. The 5-dimensional representation $A_2$ has attracted the special interest of geometers, see \cite{ABF} or \cite{CM}.

Denote by $E\U(2)$ the universal principal $\U(2)$-bundle over $B\U(2)$. For a real or complex representation $V$ of the group $\U(2)$ we define characteristic classes of $V$ as  the characteristic classes of the associated vector bundle $E\U(2)\times_{\U(2)} V$. 
It is straightforward to calculate characteristic classes of a representation $V$ in terms of
$c_1(E)=l$ and $c_2(E)=e(E)=u$. If we restrict to the classifying space of a maximal torus, $E$ splits as the direct sum $L_1\oplus L_2$ of two $1$-dimensional representations and $L=L_1\otimes_{\C}L_2$. 

For any real representation $V$ of $\U(2)$ write
$$p_1(V)=-2a(V)u+b(V)l^2,$$
where $a(V)$, $b(V)\in \Z$. Thus $a$ and $b$ are linear in $V$: $a(V\oplus W)=a(V)+a(W)$, $b(V\oplus W)=b(V)+b(W)$ for two representations $V$ and $W$.
Since $\rho_2(p_1(V))=w_2^2(V)$, we get that 
$w_2(V)=b(V)\rho_2(l)$.
If $b(V)$ is even, then 
\begin{equation}\label{even}
q_1(V)=-a(V)u+\frac{1}{2}b(V)l^2;
\end{equation}
 if $b(V)$ is odd, then
\begin{equation}\label{odd}
q_1(V;l)=-a(V)u+\frac{1}{2}(b(V)-1)l^2.
\end{equation}

Using the fact that
$S^j(E)=\bigoplus_{0\le r\le j}L_1^{\otimes(j-r)}\otimes_{\C}L_2^{\otimes r}$, 
we get 
$$V_{2i,-i}=\C\otimes A_i=\bigoplus_{-i\le r\le i}(L_1\otimes_{\C} L_2^*)^{\otimes r}$$
which yields
$A_i=\R\oplus \bigoplus_{1\le r\le i}(L_1\otimes_{\C}L_2^*)^{\otimes r}$.  Since 
$p_1(A_i)=-c_2(V_{2i,-i})$,  a standard computation expresses the second Chern class
as a symmetric polynomial in $x_1=c_1(L_1)$ and $x_2=c_1(L_2)$. In particular,
\begin{equation*}\label{a}
b(A_i)=1^2+2^2+\dots +i^2=\frac{1}{6}i(i+1)(2i+1), \quad a(A_i)=2b(A_i)
\end{equation*}
and $w_2(A_i)$ equals to $0$ for $i\equiv 0,\ 3$ $\mod 4$, and to $\rho_2 l$ for 
$i\equiv 1,\ 2$ $\mod 4$.  

Now consider $V_{j,k}$ as a real representation. Then the computation starting with
\begin{align*}
p_1(V_{j,k})&=p_1\left (\bigoplus_{0\le r\le j}L_1^{\otimes(j-r+k)}\otimes_{\C}L_2^{\otimes(r+k)}\right)=\sum_{r=0}^j p_1\left(L_1^{\otimes(j-r+k)}\otimes_{\C}L_2^{\otimes(r+k)} \right)\\
&=\sum_{r=0}^j c_1^2\left(L_1^{\otimes(j-r+k)}\otimes_{\C}L_2^{\otimes(r+k)} \right)
=\sum_{r=0}^j\left((j-r+k)x_1+(r+k)x_2\right)^2
\end{align*}
leads to
\begin{equation*}\label{v}
b(V_{j,k})=\sum_{r=0}^j(r+k)^2 \quad\text{and}\quad a(V_{j,k})=
\frac{1}{2}\sum_{r=0}^j(j-2r)^2=\frac{1}{6}j(j+1)(2j+1).
\end{equation*}

The $7$-dimensional real representations $V$ of $\U(2)$ can be listed, for $r$, $s$, $t\in \Z$ (with some redundancy because $L^{\otimes r}$ and $L^{\otimes (-r)}$ are isomorphic 
over $\R$) as:

\bigskip
\renewcommand{\arraystretch}{1.3}
\centerline{
\begin{tabular}{|c|c|c|}
\hline
Representation $V$& $a(V)$ & $b(V)$  \\
\hline
$L^{\otimes r}\oplus L^{\otimes s}\oplus L^{\otimes t}\oplus \R$&$0$ &$ r^2+s^2+t^2$\\
\hline
$(L^{\otimes s}\otimes_{\C}E)\oplus L^{\otimes t}\oplus \R$ & $1$ & $s^2+(s+1)^2+t^2$\\
\hline
$A_1\oplus L^{\otimes s}\oplus L^{\otimes t}$ & $2$ & $1+s^2+t^2$ \\
\hline
$A_1\oplus(L^{\otimes s}\otimes_{\C}E)$ & $3$ & $1+s^2+(s+1)^2$\\
\hline
$(A_1\otimes_{\R}L^{\otimes s})\oplus\R$ & $4$ & $2+3s^2$ \\
\hline
$A_2\oplus L^{\otimes s}$ & $10$ & $5+s^2$\\
\hline
$A_3$ & $28$ & $14$\\
\hline
\end{tabular}}
\bigskip

Notice that  $A_1\otimes_{\R}L^{\otimes s}\simeq V_{2,s-1}$ as real vector bundles.
The general statement is

\begin{thm}\label{7u2}
Let $\xi$ be a $7$-dimensional spin$^c$ vector bundle over a closed, connected,
smooth, spin$^c$ $7$-manifold $M$ with $w_2(M)=w_2(\xi)$.
Let $V$ be a $7$-dimensional real representation of $\U(2)$
and let $l$ be a class in $H^2(M;\,\Z )$.
\begin{enumerate}
\item If $b(V)$ is odd, then $\xi$ is isomorphic to $P\times_{\U(2)}V$ for some principal $\U(2)$-bundle $P$ over $M$ with $c_1=l$ if and only if $\rho_2(l)=w_2(\xi )\,
(=w_2(M))$ and
$$q_1(\xi;\ l)-\frac{1}{2}(b(V)-1)l^2\in a(V)H^4(M;\ \Z).$$
\item If $b(V)$ is even and $\rho_2(l)=w_2(\xi )=w_2(M)$, then $\xi$ is isomorphic to $P\times_{\U(2)}V$ for some principal $\U(2)$-bundle $P$ over $M$ with $c_1=l$ if and only if $\rho_2(l)=0$ (so that
$\xi$ and $M$ are spin) and
$$q_1(\xi)-\frac{1}{2}b(V)l^2\in a(V)H^4(M;\ \Z).$$
\end{enumerate}
\end{thm}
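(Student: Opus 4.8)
The plan is to reduce the whole statement to the classification of spin$^c$ $7$-bundles furnished by Proposition \ref{spinc7}, combined with the realisation result Proposition \ref{u2}. A principal $\U(2)$-bundle $P$ over $M$ is the same data as its associated standard complex $2$-plane bundle $\eta = P\times_{\U(2)}E$, and the normalisation $c_1 = l$ means $c_1(\eta) = l$; write $u = c_2(\eta) = e(\eta)\in H^4(M;\Z)$. For the associated real $7$-bundle $W = P\times_{\U(2)}V$ I would first record its characteristic classes by pulling back the universal computations of Section 4 along the classifying map of $\eta$. Since $\U(2)$ is connected, $W$ is orientable, and $w_2(V) = b(V)\rho_2(l)$ gives $w_2(W) = b(V)\rho_2(l)$; formulas (\ref{even}) and (\ref{odd}) give $q_1(W) = -a(V)u + \tfrac12 b(V)l^2$ when $b(V)$ is even and $q_1(W;l) = -a(V)u + \tfrac12(b(V)-1)l^2$ when $b(V)$ is odd, the spin$^c$ structure on $W$ being the one with $c_1 = l$ inherited from the $\U(2)$-reduction.

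For necessity, suppose $\xi\cong W$ for some such $P$. Comparing second Stiefel--Whitney classes gives $w_2(\xi) = b(V)\rho_2(l)$; together with the hypothesis $w_2(\xi) = \rho_2(l)$ this is automatic when $b(V)$ is odd and forces $\rho_2(l) = 0$ when $b(V)$ is even, which is exactly the extra condition of case (2). Reading off $q_1$ from the formulas above and equating with $q_1(\xi;l)$ (respectively the spin characteristic class $q_1(\xi)$ in the even case) yields $q_1(\xi;l) - \tfrac12(b(V)-1)l^2 = -a(V)u$ (respectively $q_1(\xi) - \tfrac12 b(V)l^2 = -a(V)u$), so the displayed class lies in $a(V)H^4(M;\Z)$.

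For sufficiency I would run this backwards. Given the divisibility hypothesis, choose $u\in H^4(M;\Z)$ with $-a(V)u$ equal to $q_1(\xi;l) - \tfrac12(b(V)-1)l^2$ (odd case) or to $q_1(\xi) - \tfrac12 b(V)l^2$ (even case); this is possible precisely because that class is divisible by $a(V)$, and when $a(V) = 0$ the membership condition forces the relevant difference to vanish, so any $u$ works. Since $w_2(M) = \rho_2(l)$, Proposition \ref{u2} produces a complex $2$-plane bundle $\eta$ with $c_1(\eta) = l$ and $c_2(\eta) = u$, hence a principal $\U(2)$-bundle $P$ with $c_1 = l$. Form $W = P\times_{\U(2)}V$. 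By construction $q_1(W;l) = q_1(\xi;l)$ (respectively $q_1(W) = q_1(\xi)$), and $w_2(W) = b(V)\rho_2(l)$ equals $w_2(\xi)$: automatically when $b(V)$ is odd, and because $\rho_2(l) = 0$ when $b(V)$ is even. As $\xi$ and $W$ are then orientable spin$^c$ $7$-bundles with equal $w_2$ and equal spin$^c$ characteristic class for the common datum $l$ (the genuine spin characteristic class $q_1$ in the even case), Proposition \ref{spinc7} gives $\xi\cong W$, as required.

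The main obstacle I expect is bookkeeping rather than any single hard estimate: one must verify that the spin$^c$ characteristic class of $W = P\times_{\U(2)}V$ is genuinely the pull-back of the universal class of $V$ computed in Section 4, with the correct normalisation of the spin$^c$ structure (equivalently, the correct choice of $l$ in $q_1(\cdot;l)$), so that Proposition \ref{spinc7} may be invoked with a single common class $l$. The parity of $b(V)$ is exactly what decides whether $W$ is merely spin$^c$ (odd, with $w_2 = \rho_2(l)$) or actually spin (even, forcing $\rho_2(l) = 0$), and the degenerate case $a(V) = 0$ must be handled separately, since there the divisibility condition degenerates to an equality.
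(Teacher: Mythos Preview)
Your proposal is correct and follows essentially the same route as the paper's own proof: necessity via the universal formulas (\ref{even}) and (\ref{odd}), sufficiency by realising the required $(l,u)$ via Proposition \ref{u2} and then invoking the classification Proposition \ref{spinc7}. The paper is terser and does not separately discuss the $a(V)=0$ degeneration or the spin$^c$-structure bookkeeping you flag, but the logical skeleton is identical.
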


\begin{proof} The conditions in both cases are necessary since if $c_2$ of a principal $\U(2)$-bundle $P$
is equal to an element $u\in H^4(M;\ \Z)$, the spin$^c$ characteristic class satisfies (\ref{even}) or (\ref{odd}).

Conversely, suppose that there is $u\in H^4(M;\ \Z)$ such that
$$q_1(\xi;\ l)=\frac{1}{2}(b(V)-1)l^2+ a(V)u, \quad\text{if }b(V)\text{ is odd},$$
or 
$$q_1(\xi)=\frac{1}{2}b(V)l^2+ a(V)u,\quad\text{if }b(V)\text{ is even and }\rho_2(l)=0.$$
According to Proposition \ref{u2} there is a $2$-dimensional complex bundle $\eta$
with  $c_1(\eta)=l$ and $c_2(\eta)=u$. Since $\eta=P\times_{\U(2)}E$ for some principal
$\U(2)$-bundle $P$, we get that the real vector bundle $\xi'=P\times_{\U(2)}V$ has $w_2(\xi')=\rho_2(l)=w_2(\xi)$ and $q_1(\xi';\ l)=q_1(\xi;\ l)$. Hence Proposition \ref{spinc7} implies that $\xi=\xi'$.

Notice that the assuption $\rho_2(l)=w_2(\xi)$ in (2) is needed to apply Proposition \ref{u2}.
\end{proof}

In special cases we obtain results on $\SO(3)$-structures. The real representations
of $\U(2)$ trivial on the centre $\mathbb T$ of the group $\U(2)$ determine representations of $\SO(3)=\U(2)/\mathbb T$.  The $7$-dimensional representations of $\SO(3)$ are:

\bigskip
\renewcommand{\arraystretch}{1.3}
\centerline{
\begin{tabular}{|c|c|c|}
\hline
Representation $V$ & $a(V)$ & $b(V)$\\
\hline
 $\R^7$ & $0$ & $0$\\
\hline
$A_1\oplus\R^4$ & $2$ & $1$\\
\hline
 $A_1\oplus A_1 \oplus\R$ & $4$ & $2$\\
\hline
$A_2\oplus\R^2$ & $10$ & $5$\\
\hline
 $A_3$ & $28$ & $14$\\
 \hline
\end{tabular}}
\bigskip

In view of the isomorphism between $\U (2)$ and $\Spin^c (3)$,
given $l$ and $V$, Theorem 4.1 gives necessary and sufficient
conditions for the existence of a $\Spin^c (3)$ vector bundle
$\alpha$ with $\Spin^c$ characteristic class equal to $l$ 
such that $\xi$ is isomorphic to the bundle associated with 
$\alpha$ and $l$ by the representation $V$.

We have to distinguish two cases.

\smallskip
\par\noindent
(i) If $b(V)$ is even, then the manifold $M$ and the bundle $\xi$
must be spin ($w_2(M)=w_2(\xi)=0$). Theorem 4.1 gives necessary and sufficient conditions for the
existence of a spin$^c$ bundle $\alpha$ with $l=2m$:
there exists $m\in H^2(M;\, \Z )$ such that 
$$q_1(\xi )-a(V)m^2=q_1(\xi )-2b(V)m^2=q_1(\xi )-\frac{1}{2}b(V)(2m)^2 \in 
a(V)H^4(M;\, \Z ).$$  

\smallskip
\par\noindent
(ii) If $b(V)$ is odd, then $w_2\alpha =w_2\xi (=w_2(M))$ is 
a necessary condition for the existence of $\alpha$.  For a representation
$V$ factoring through $\U(2)/\mathbb T=\SO (3)$, this condition on $w_2\alpha$
is necessary even without assuming that $\alpha$ is spin$^c$, because
the homomorphism $H^2(B\SO (3);\, \Z /2) \to H^2(B\U (2);\, \Z /2)$ is
injective.  So in this case Theorem 4.1 gives necessary and
sufficient conditions for the existence of an $\SO (3)$-bundle 
$\alpha$ such that $\xi$ is isomorphic to the bundle associated
with $\alpha$ by the representation $\SO (3) \to \O (7)$ giving $V$:
there exists $l\in H^2(M;\, \Z)$ such that $w_2(M)=\rho_2(l)$ and
$q_1(\xi ; l) -\frac{1}{2}(b(V)-1)l^2 \in a(V)H^4(M;\, \Z )$.
And, if such a bundle $\alpha$ exists it must satisfy $w_2\alpha =
w_2\xi$ and so be spin$^c$.

\begin{cor}\label{7so3}
Let $\xi$ be a $7$-dimensional spin$^c$ vector bundle over a closed, connected,
smooth, spin$^c$ $7$-manifold $M$ with $w_2(M)=w_2(\xi)$.
Then
\begin{enumerate}
\item $\xi$ is a trivial vector bundle if and only if $w_2(\xi)=0$ and $q_1(\xi)=0$;
\item $\xi$ always has $3$ linearly independent sections; 
\item $\xi$ has $4$ linearly independent sections if and only if $w_4(\xi)=0$;
\item there is a $3$-dimensional spin vector bundle $\alpha$ such that $\xi\cong\alpha\oplus\alpha\oplus\R$ if and only if $w_2(\xi)=0$ and $q_1(\xi)\in 4H^4(M;\ \Z)$;
\item the structure group of $\xi$ reduces to $\SO(3)$ through the homomorphism
$\SO(3)\to \SO(5)\subseteq \SO(7)$ corresponding to $A_2$ if and only if $w_4(\xi)=0$ and 
$p_1(\xi)\in 5H^4(M;\ \Z)$;
\item  the structure group of $\xi$ reduces to $\Spin(3)$ through the homomorphism
$$\Spin(3)\to\SO(3)\to \SO(7)$$ 
corresponding to $A_3$ if and only if $w_2(\xi)=0$ and $q_1(\xi)\in 28H^4(M;\ \Z)$.
\end{enumerate}
\end{cor}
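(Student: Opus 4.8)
The plan is to read off each of the six statements from Theorem~\ref{7u2}, applied to the five $7$-dimensional representations of $\SO(3)=\U(2)/\Tt$ tabulated above, and then to rewrite the resulting divisibility condition on $q_1$ in the asserted form. Part~(2) is the exception: it is not a divisibility statement but the unconditional splitting $\xi\cong\eta\oplus\R^3$ already furnished by Proposition~\ref{eta}, so three independent sections always exist. For the other parts the recurring algebraic input is the triple of identities $\rho_2 q_1(\xi;l)=w_4(\xi)$, $2q_1(\xi;l)=p_1(\xi)-l^2$, and the change-of-lift formula $q_1(\xi;l+2m)=q_1(\xi;l)-2lm-2m^2$, together with the observation that every representation in the table satisfies $a(V)=2b(V)$. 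The realization of the reduction in each sufficiency direction is exactly the content of Theorem~\ref{7u2}, resting on Propositions~\ref{u2} and~\ref{spinc7}.

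I would take first the cases with $b(V)$ even, namely $\R^7$, $A_1\oplus A_1\oplus\R$ and $A_3$ (parts (1), (4), (6)), for which $a(V)=0,4,28$. Here Theorem~\ref{7u2}(2) forces $w_2(\xi)=0$ and asks for some $m$ with $q_1(\xi)-\tfrac12 b(V)(2m)^2\in a(V)H^4(M;\Z)$. Because $a(V)=2b(V)$, the correction term $\tfrac12 b(V)(2m)^2=a(V)m^2$ already lies in $a(V)H^4$ and drops out, so the condition collapses to $q_1(\xi)\in a(V)H^4(M;\Z)$, i.e. to $q_1(\xi)=0$, $q_1(\xi)\in 4H^4$ and $q_1(\xi)\in 28H^4$ respectively. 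The geometric refinements then follow from $w_2(A_i)=0$: for $A_3$ (with $i\equiv 3$) the $\SO(3)$-bundle is spin and the reduction is to $\Spin(3)$; for $A_1\oplus A_1\oplus\R$ the forced $l=2m$ makes $\rho_2 l=0$, so the $A_1$-summand is a spin $3$-bundle $\alpha$ and $\xi\cong\alpha\oplus\alpha\oplus\R$.

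For the two cases with $b(V)$ odd, $A_1\oplus\R^4$ and $A_2\oplus\R^2$ (parts (3), (5)), Theorem~\ref{7u2}(1) requires some $l$ with $\rho_2 l=w_2(\xi)$ (one exists since $M$ is spin$^c$) and $q_1(\xi;l)-\tfrac12(b(V)-1)l^2\in a(V)H^4$. For $A_1\oplus\R^4$ ($a=2$, $b=1$) this is simply $q_1(\xi;l)\in 2H^4$, equivalent to $\rho_2 q_1(\xi;l)=w_4(\xi)=0$, which gives part~(3). Part~(5), with $A_2\oplus\R^2$ ($a=10$, $b=5$), is the one case requiring genuine work: the condition reads $X:=q_1(\xi;l)-2l^2\in 10H^4$, and I expect converting this into the pair $\{w_4(\xi)=0,\ p_1(\xi)\in 5H^4\}$ to be the main obstacle.

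I would carry this out as follows. First, the change-of-lift formula gives $X(l+2m)=X(l)-10(lm+m^2)$, so the condition is independent of the chosen $l$; fix one $l$ with $\rho_2 l=w_2(\xi)$. Next, since $\Z/10\cong\Z/2\oplus\Z/5$, reduction mod $10$ splits as $(\rho_2,\rho_5)$ (with $\rho_5$ reduction mod $5$), so by the coefficient sequence $X\in 10H^4$ if and only if $\rho_2 X=0$ and $\rho_5 X=0$. Now $\rho_2 X=\rho_2 q_1(\xi;l)=w_4(\xi)$, while $2X=2q_1(\xi;l)-4l^2=p_1(\xi)-5l^2$ yields $2\rho_5 X=\rho_5 p_1(\xi)$, whence $\rho_5 X=0\iff p_1(\xi)\in 5H^4$ since $2$ is invertible mod $5$. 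Combining, $X\in 10H^4$ precisely when $w_4(\xi)=0$ and $p_1(\xi)\in 5H^4$, which is part~(5). I would finally note that for the even-$b$ parts the analogous arithmetic is trivial (the modulus absorbs the $m$-term outright), so essentially all the content of the corollary beyond Proposition~\ref{eta} is concentrated in this last computation.
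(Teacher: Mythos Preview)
Your proposal is correct and follows essentially the same route as the paper: parts (1), (4), (6) come from Theorem~\ref{7u2}(2) applied to the even-$b(V)$ rows of the table (using $a(V)=2b(V)$ to absorb the $l^2$-term), parts (3), (5) from Theorem~\ref{7u2}(1) applied to the odd-$b(V)$ rows, and part (2) directly from Proposition~\ref{eta}. Your treatment of (5) is in fact more explicit than the paper's, which simply records the identity $p_1(\xi)=2\bigl(q_1(\xi;l)-2l^2\bigr)+5l^2$ and asserts the equivalence without writing out the mod-$2$/mod-$5$ split or the $l$-independence.

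One small slip: in your justification for (6) you invoke ``$w_2(A_i)=0$ for $i\equiv 3$'', but that is $w_2$ of the $7$-dimensional associated bundle (i.e.\ of $\xi$ itself), not of the underlying $\SO(3)$-bundle. The reason the $\SO(3)$-bundle lifts to $\Spin(3)$ is exactly the one you state correctly for (4): the even-$b(V)$ case forces $\rho_2(l)=0$, so the associated $A_1$-bundle has $w_2(\alpha)=\rho_2(l)=0$, and the $\SO(3)$-structure therefore lifts to $\Spin(3)$.
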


Theorem \ref{symplectic} from the Introduction is an immediate consequence of this corollary.

\begin{remark} In \cite{ABF}, Theorem 3.2 states that for the existence of the irreducible $\SO(3)$-structure corresponding to the representation $A_2$ on tangent bundles of $5$-dimensional oriented manifolds the conditions $w_4(M)=0$ and $p_1(M)\in 5H^4(M;\, \Z)$ 
are necessary. Part (4) of Corollary \ref{7so3} shows that on $7$-dimensional spin$^c$ manifolds these conditions are also sufficient. 
\end{remark}

\begin{proof} The proofs of (1), (4) and (6) are covered by the considerations for $b(V)$ even  in (i) preceding Corollary \ref{7so3}.

\smallskip
\par\noindent
(2) follows  directly from Proposition \ref{eta}. 

\smallskip
\par\noindent
(3) If $\xi=\alpha\oplus\R^4$ for a $3$-dimensional vector bundle $\alpha$, then
$w_4(\xi)=0$ and $w_2(\alpha)=w_2(\xi)=\rho_2 l$. Hence $\alpha$ has the $\Spinc(3)$-structure given by the class $l\in H^2(M;\Z)$. The condition
$w_4(\xi)=0$ is equivalent to the condition 
$$q_1(\xi;l)=q_1(\xi;l)-\frac{1}{2}(1-1)l^2\in 2H^4(M;\mathbb Z)$$ 
which is the sufficient and necessary condition for the existence of the 
$\Spinc(3)$-structure corresponding to $A_1$ according to Theorem \ref{7u2}.

\smallskip
\par\noindent
(5) Let $\xi$ reduce to $\SO(3)$ via the representation $A_2$. Since $b(A_2)=5$,  according to (ii) preceeding Corollary \ref{7so3} the $3$-dimensional real  vector bundle $\alpha$ associated to the $\SO(3)$-structure has $w_2(\alpha)=w_2(\xi)=\rho_2(l)$. We can apply Theorem \ref{7u2} to get as a necessary and sufficient condition
$$q_1(\xi;l)-\frac{1}{2}(5-1)l^2\in 10 H^4(M;\Z).$$
This condition is equivalent to  $w_4(\xi)=\rho_2q_1(\xi;\ l)=0$ and $p_1(\xi)\in 5H^4(M;\mathbb Z)$ since
$$p_1(\xi)=2q_1(\xi;l)+l^2=2(q_1(\xi;l)-\frac{1}{2}(5-1)l^2)+5l^2.$$

\end{proof}

\begin{remark} Let $M$ be a manifold as above.  If we want to apply the previous results to the tangent of $M$ or the normal bundle of some immersion of $M$ into $\R^{14}$, we can use the following computation of $w_4$.
Let $v_i(M)$ denote the Wu classes of $M$. Since
$w_2(M)=\rho_2l$, $w_3(M)=\Sq{1} w_2(M)=0$. Now $w(M)=\Sq{}(v(M))$  and  $v_j(M)=0$ for $j\ge 4$. Further, $v_1(M)=w_1(M)=0$, $v_2(M)=w_2(M)$, $v_3(M)+\Sq{1}w_2(M)=w_3(M)=0$, so that $v_3(M)=0$, and $w_4(M)=\Sq{1}v_3(M)+\Sq{2}v_2(M)=v_2(M)^2=w_2(M)^2$. This implies that:
\begin{enumerate}
\item[(i)] For the tangent bundle $w_4(M)=0$ if and only if $w_2(M)^2=0$.
\item[(ii)] If $\nu$ is the normal bundle of an immersion, then $w_4(\nu)=0$.
\end{enumerate}
\end{remark}

We conclude this section with a result on the existence of $\U(3)$-structures.

\begin{prop}\label{7u3}
Suppose that $M$ is a connected, closed, smooth, spin$^c$ $7$-manifold and $\xi$ an oriented $7$-dimensional real vector bundle with $w_2(\xi)=w_2(M)=\rho_2l$. Then for any $u\in H^4(M;\ \Z)$ and $v\in 2H^6(M;\ \Z)$ there is a $3$-dimensional complex vector bundle $\zeta$ such that $c_1(\zeta)=l$, $c_2(\zeta)=u$, $c_3(\zeta)=v$. Moreover, $\xi$ is isomorphic to $\zeta\oplus\R$ if and only if $q_1(\xi;\ l)=-u$.
\end{prop}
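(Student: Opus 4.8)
The plan is to treat the two assertions separately: realise the prescribed Chern data by Proposition \ref{u3}, and then settle the isomorphism question by Proposition \ref{spinc7}.

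For the existence of a $3$-dimensional complex bundle $\zeta$ with $c_1(\zeta)=l$, $c_2(\zeta)=u$ and $c_3(\zeta)=v$, I would invoke Proposition \ref{u3}, which says that over a manifold of dimension $\le 7$ the realisable triples $(l,u,v)$ are exactly those satisfying $\Sq{2}\rho_2(u)+\rho_2(lu)=\rho_2(v)$. Since $v\in 2H^6(M;\Z)$ forces $\rho_2(v)=0$, it remains to check the identity $\Sq{2}\rho_2(u)+\rho_2(lu)=0$, and this is precisely the computation already carried out in the alternative proof of Proposition \ref{u2}. Concretely, for every $x\in H^1(M;\Z/2)$ the Cartan formula together with $\Sq{1}\rho_2(u)=0$ gives $\Sq{2}\bigl(x\rho_2(u)\bigr)=x\,\Sq{2}\rho_2(u)$; on the other hand the Wu formula on the closed $7$-manifold $M$ evaluates the left-hand side as $w_2(M)\,x\rho_2(u)=x\rho_2(lu)$, using $w_2(M)=\rho_2 l$. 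Since the pairing $H^1(M;\Z/2)\times H^6(M;\Z/2)\to H^7(M;\Z/2)\cong\Z/2$ is nondegenerate by Poincar\'e duality, equality after multiplying by every $x$ yields $\Sq{2}\rho_2(u)=\rho_2(lu)$, as needed; Proposition \ref{u3} then produces $\zeta$.

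For the isomorphism criterion I would first record that $\zeta\oplus\R$ is an oriented $7$-dimensional bundle with $w_2(\zeta\oplus\R)=w_2(\zeta)=\rho_2\bigl(c_1(\zeta)\bigr)=\rho_2(l)=w_2(\xi)$, so that both $\xi$ and $\zeta\oplus\R$ carry the common value $\rho_2 l$ of $w_2$. Proposition \ref{spinc7} then reduces the relation $\xi\cong\zeta\oplus\R$ to the single equality $q_1(\xi;l)=q_1(\zeta\oplus\R;l)$, and it therefore suffices to compute $q_1(\zeta\oplus\R;l)$. Because the spin$^c$ characteristic classes are stable and $\zeta\oplus\R$ is stably equivalent to the complex bundle $\zeta$, whose canonical complex (hence spin$^c$) structure has first Chern class $c_1(\zeta)=l$, the definitions of Section 2 give $q_1(\zeta\oplus\R;l)=-c_2(\zeta)=-u$. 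Combining these equalities, $\xi\cong\zeta\oplus\R$ holds if and only if $q_1(\xi;l)=-u$, which is the assertion.

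The part demanding the most care is the realisability constraint in the first assertion: everything hinges on verifying $\Sq{2}\rho_2(u)+\rho_2(lu)=0$, where both the hypothesis $w_2(M)=\rho_2 l$ and the closedness of $M$ (through the Wu formula and Poincar\'e duality) are genuinely used. Once that identity is in hand the construction of $\zeta$ is immediate from Proposition \ref{u3}, and the isomorphism statement is pure bookkeeping via the stability of $q_1$ and Proposition \ref{spinc7}; the only point to watch there is that one must use the spin$^c$ structure on $\zeta\oplus\R$ induced by the complex structure on $\zeta$, so that no correction term involving $l^2$ enters and $q_1(\zeta\oplus\R;l)$ equals $-c_2(\zeta)$ on the nose.
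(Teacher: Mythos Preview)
Your proof is correct and follows the same route as the paper's: invoke Proposition \ref{u3} for existence by verifying $\Sq{2}\rho_2(u)=\rho_2(lu)$ via the $x\in H^1(M;\Z/2)$ argument (exactly the computation from the alternative proof of Proposition \ref{u2}), and then deduce the isomorphism criterion from Proposition \ref{spinc7} together with $q_1(\zeta\oplus\R;l)=-c_2(\zeta)$. Your write-up is in fact more careful than the paper's at one point: you correctly note that the Cartan formula needs $\Sq{1}\rho_2(u)=0$ (which holds because $u$ is integral) to kill the cross term, whereas the paper simply asserts $\Sq{2}\rho_2(u)=w_2(M)\rho_2(u)$ by reference to the earlier argument.
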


\begin{proof}  According to Proposition \ref{u3} the condition
$$Sq^2\rho_2(u)+\rho_2(lu)= w_2(M)\rho_2(u)+w_2(M)\rho_2(u)=0=\rho_2(v)$$
is sufficient  for the existence of a $3$-dimensional
complex bundle $\zeta$ with the Chern classes $c_1(\zeta)=l$, $c_2(\zeta)=u$ and $c_3(\zeta)=v$. 

Since $w_2(\xi)=\rho_2(l)=w_2(\zeta)$, the real vector bundles $\xi$ and $\zeta\oplus \R$ are isomorphic if and only if $q_1(\xi;l)=q_1(\zeta\oplus\R;l)=-c_2(\zeta)=-u$ by
Proposition  \ref{spinc7}.
\end{proof}

\section{Dimension $6$}

Now suppose that $M$ is a connected closed manifold of dimension  $6$. 
Let $\xi$ and $\xi'$ be two $m$-dimensional vector bundles over $M$ with
$w_2(\xi)=w_2(\xi')=\rho_2(l)$ for some $l\in H^2(M;\Z)$ and with 
$q_1(\xi;l)=q_1(\xi';l)$. Consider the $7$-dimensional manifold 
$N=M\times S^1$ and the $7$-dimensional bundles $p^*(\xi)\oplus \R$ 
and $p^*(\xi')\oplus \R$ where $p:N\to M$ is the obvious projection.
According to Proposition \ref{spinc7} these two bundles are isomorphic. 
Hence $\xi$ and $\xi'$ are stably isomorphic.  

For even dimensions the Euler class will distinguish 
stably isomorphic bundles. This fact has been considered to be well known for a long time; see the sentence following Lemma 2 in \cite{W}. Unfortunately, we have found only one reference for its proof, the relatively recent paper \cite{HRS}, (Theorem 3.9). This proof uses the Moore-Postnikov tower for the map $B\SO(m)\to B\SO(\infty)$ induced by inclusion. For the sake of completeness we include an alternative  proof  in  the Appendix. Using this fact  we get

\begin{prop}\label{spinc6}
Suppose that $M$ is a connected closed manifold of dimension $6$. 
Consider two orientable $6$-dimensional real vector bundles $\xi$ and
$\xi'$ with $w_2(\xi)=w_2(\xi')=\rho_2(l)$ for some $l\in H^2(M;\Z)$.
Then $\xi$ and $\xi'$ are isomorphic if and only if
$$q_1(\xi;l)=q_1(\xi';l),\quad\text{and}\quad e(\xi)=\pm e(\xi').$$
\end{prop}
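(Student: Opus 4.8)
The plan is to read the proposition off as the combination of two facts that have been assembled in the paragraphs immediately preceding the statement. The first is that the hypotheses $w_2(\xi)=w_2(\xi')=\rho_2(l)$ and $q_1(\xi;l)=q_1(\xi';l)$ already force $\xi$ and $\xi'$ to be \emph{stably} isomorphic: pulling back along $p\colon N=M\times S^1\to M$ and comparing the $7$-dimensional bundles $p^*\xi\oplus\R$ and $p^*\xi'\oplus\R$, Proposition~\ref{spinc7} identifies them over $N$, and restriction to $M\times\{*\}$ gives $\xi\oplus\R\cong\xi'\oplus\R$. The second is the even-dimensional principle recorded just above and proved in the Appendix: among stably isomorphic oriented bundles of rank equal to $\dim M$, the Euler class is a complete invariant. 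So the work is to fit these two imported results together and to track orientations.

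For necessity, if $\xi\cong\xi'$ as real vector bundles then the natural classes $w_2$ and $q_1(\,\cdot\,;l)$ coincide (the spin$^c$ structure with first Chern class $l$ transports across the isomorphism with $c_1$ unchanged), while the Euler class is carried to $e(\xi')$ or to $-e(\xi')$ according as the isomorphism preserves or reverses the chosen orientations; this is exactly $e(\xi)=\pm e(\xi')$. For sufficiency I would first treat the case $e(\xi)=e(\xi')$: here $\xi$ and $\xi'$ are stably isomorphic oriented $6$-bundles over the $6$-manifold $M$ with equal Euler classes, so the Appendix principle upgrades the stable isomorphism to an honest isomorphism $\xi\cong\xi'$. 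The case $e(\xi)=-e(\xi')$ I would reduce to the first by passing to $\bar{\xi'}$, the bundle $\xi'$ with reversed orientation: since $w_2$ and $q_1(\,\cdot\,;l)$ are insensitive to orientation while the Euler class changes sign, $\bar{\xi'}$ satisfies the same hypotheses relative to $\xi$ and has $e(\bar{\xi'})=e(\xi)$, so the first case yields $\xi\cong\bar{\xi'}$ and hence $\xi\cong\xi'$ as (unoriented) vector bundles.

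The one point that genuinely needs checking — and the step I expect to be the delicate one, since the substantive content lives entirely in the two imported results — is the orientation bookkeeping in the last reduction, namely that reversing orientation leaves $q_1(\,\cdot\,;l)$ unchanged. Here I would note that $\rho_2(l)=w_2(\bar{\xi'})$ still occurs among the first Chern classes of the spin$^c$ structures on $\bar{\xi'}$, so one may choose the same class $l$; and since $p_1$ is orientation-independent, the relation $2q_1=p_1-l^2$ gives $2q_1(\bar{\xi'};l)=p_1(\xi)-l^2=2q_1(\xi;l)$. Upgrading this to the integral equality $q_1(\bar{\xi'};l)=q_1(\xi;l)$ I would deduce from the orientation-invariance of the degree-$4$ universal class in $H^4(B\Spinc;\Z)$ under the automorphism of $\Spinc$ induced by an orientation-reversing element of $\O(6)$, which fixes $p_1$ and $q_1$ but negates $e$. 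With that verified, the proposition follows by the formal assembly of the stable-isomorphism step and the even-dimensional Euler-class theorem.
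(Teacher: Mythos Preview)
Your proposal is correct and follows the paper's approach exactly: combine the stable-isomorphism step (the $M\times S^1$ trick together with Proposition~\ref{spinc7}, spelled out in the paragraph preceding the proposition) with the Appendix result, Theorem~\ref{euler}. The only difference is that you do more work than necessary: Theorem~\ref{euler} is already stated with the $\pm$ sign built in (stably isomorphic oriented $m$-bundles over an $m$-manifold are isomorphic if and only if $e(\xi)=\pm e(\xi')$), so there is no need to split into the cases $e(\xi)=e(\xi')$ and $e(\xi)=-e(\xi')$, and hence no need for the orientation-reversal detour or the verification that $q_1(\,\cdot\,;l)$ is insensitive to orientation. Once stable isomorphism is established, a single invocation of Theorem~\ref{euler} finishes the proof.
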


Further we need the following analogue of Proposition \ref{u2}.

\begin{prop}\label{over6}
Suppose that  $M$ is a connected closed spin$^c$
$6$-manifold with $w_2(M)=\rho_2(l)$ for some $l\in H^2(M;\Z)$. Let $u\in
H^4(M;\Z)$ be arbitrary.  Then there is a $2$-dimensional complex bundle $\eta$ such that
$$c_1(\eta)=l,\quad c_2(\eta)=u.$$ 
If $\lambda$ is a complex line bundle with $c_1(\lambda)=l$, then $\eta$ is an $\Hl$-line bundle with $e(\eta)=u$.
\end{prop}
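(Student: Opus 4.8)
The plan is to deduce Proposition~\ref{over6} from Proposition~\ref{u3}, mirroring the second proof of Proposition~\ref{u2}; the only new ingredient is the way a nowhere-zero section is produced in dimension $6$. First I would build a $3$-dimensional complex bundle $\beta$ over $M$ with $c_1(\beta)=l$, $c_2(\beta)=u$ and $c_3(\beta)=0$, and then split off a trivial complex line to obtain the required $\eta$.

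To invoke Proposition~\ref{u3} (which applies since $\dim M=6\le 7$) for the triple $(l,u,0)$ I must check that $\Sq2\rho_2(u)+\rho_2(lu)=\rho_2(0)=0$ in $H^6(M;\Z/2)$. Because $M$ is spin$^c$ it is orientable, so $w_1(M)=0$ and the Wu class $v_2(M)=w_2(M)+w_1(M)^2=w_2(M)=\rho_2(l)$. Wu's formula on the closed $6$-manifold $M$ then gives $\Sq2(\rho_2 u)=v_2(M)\,\rho_2(u)=\rho_2(l)\rho_2(u)=\rho_2(lu)$ for the class $\rho_2(u)\in H^4(M;\Z/2)$, whence $\Sq2\rho_2(u)+\rho_2(lu)=0$ automatically. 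This is even cleaner than the $7$-dimensional case, where the analogous identity had to be recovered from a Poincar\'e-duality pairing against $H^1$. Proposition~\ref{u3} then furnishes the bundle $\beta$.

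Next I would split $\beta\cong\C\oplus\eta$. Here $\beta$ has complex rank $3$, hence real rank $6$, and $M$ may be taken to be a CW complex of dimension $6$. The obstructions to a nowhere-zero section of $\beta$ lie in $H^{k+1}(M;\pi_k(S^5))$; since $\pi_k(S^5)=0$ for $k<5$, the sole obstruction sits in $H^6(M;\Z)$ and equals the Euler class $e(\beta)=c_3(\beta)=0$. Thus $\beta$ admits a nowhere-zero section, and a Hermitian metric splits it as $\C\oplus\eta$ with $\eta$ a $2$-dimensional complex bundle satisfying $c_1(\eta)=l$ and $c_2(\eta)=u$. The $\Hl$-line bundle assertion is then immediate from the Remark following Proposition~\ref{eta}: taking $\lambda=\Lambda^2(\eta)$ gives $c_1(\lambda)=c_1(\eta)=l$, and the Euler class of $\eta$ is $e(\eta)=c_2(\eta)=u$.

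The hard part is the splitting step rather than the cohomological bookkeeping: I must be certain that the single top-degree obstruction really is the integral Euler class $c_3(\beta)$ and that no secondary obstruction intervenes. This is safe here only because $M$ is a $6$-complex while the fibre of the sphere bundle of $\beta$ is $S^5$, so $H^6(M;\Z)$ is simultaneously the first and the last degree in which an obstruction can occur. The Wu-formula step, by contrast, is routine once orientability identifies $v_2(M)$ with $w_2(M)=\rho_2(l)$.
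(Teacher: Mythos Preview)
Your argument is correct, but it is not the route the paper takes. The paper's proof is a three-line reduction to the $7$-dimensional case: form the spin$^c$ $7$-manifold $N=M\times S^1$, apply Proposition~\ref{u2} to obtain a rank-$2$ complex bundle $\eta'$ over $N$ with $c_1(\eta')=p^*l$ and $c_2(\eta')=p^*u$, and then restrict along $i:M\hookrightarrow M\times\{*\}$.

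Your approach instead mimics the \emph{second} proof of Proposition~\ref{u2} directly on the $6$-manifold, and in fact the Wu-formula step becomes cleaner: since $\rho_2(u)\in H^4(M;\Z/2)$ lies in the top degree for $\Sq{2}$ on a closed $6$-manifold, Wu's formula gives $\Sq{2}\rho_2(u)=v_2(M)\cup\rho_2(u)=\rho_2(lu)$ immediately, without the Poincar\'e-duality pairing against $H^1$ needed in dimension~$7$. The splitting step is also unproblematic, as you note: the sphere bundle of $\beta$ has fibre $S^5$, so on a $6$-complex the sole obstruction to a nowhere-zero section is the primary one $e(\beta)=c_3(\beta)=0$, and a Hermitian metric then splits off a trivial complex line.

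What each approach buys: the paper's route is shorter and exhibits Proposition~\ref{over6} as a formal consequence of the $7$-dimensional existence result already in hand; yours is self-contained and avoids passing through $M\times S^1$, at the price of re-running the Proposition~\ref{u3} machinery. Either is acceptable.
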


\begin{proof} Let $i:M\to M\times S^1$ be the inclusion of $M\times\{*\}$ and $p:M\times S^1\to M$ the projection. According to Proposition \ref{u2} there is a $2$-dimensional complex bundle
$\eta'$ over $M\times S^1$ with $c_1(\eta')=p^*(l)$ and $c_2(\eta')=p^*(u)$. Then $\eta=i^*(\eta')$ over $M$ has the prescribed properties.
\end{proof}

Now we can follow the same lines as in Section 4 to obtain results on the existence of 
$G$-structures on vector bundles over $6$-manifolds.

Consider a real $6$-dimensional representation $V$ of $\U(2)$.  As in the previous section
the first Pontrjagin class of the associated vector bundle over $B\U(2)$ 
is  determined by integers $a(V)$, $b(V)$ and the Euler class is given by  integers $c(V)$, $d(V)$ where 
$$e(V)=c(V)lu+d(V)l^3.$$ 
The list of these representations is as follows:

\bigskip
\renewcommand{\arraystretch}{1.3}
\centerline{
\begin{tabular}{|c|c|c|c|c|}
\hline
Representation $V$ & $a(V)$ & $b(V)$ & $c(V)$ & $d(V)$\\
\hline
 $L^{\otimes r}\oplus L^{\otimes s}\oplus L^{\otimes t}$ & $0$ & $r^2+s^2+t^2$ & $0$
 & $rst$\\
 \hline
 $(L^{\otimes s}\otimes_{\C}E)\oplus L^{\otimes t}$ & $1$ & $s^2+(s+1)^2+t^2$ & $t$ & $st(s+1)$\\
\hline
 $A_1\oplus L^{\otimes s}\oplus \R$ & $2$ & $1+s^2$ & $0$ & $0$\\
 \hline
$A_1\otimes_{\R}L^{\otimes s}$ & $4$ & $2+3s^2$ & $4s$ & $(s^2-1)s$\\
\hline
$A_2\oplus \R$ & $10$ & $5$ & $0$ & $0$\\
\hline
\end{tabular}}
\bigskip

\begin{thm}\label{6u2}
Let $\xi$ be a $6$-dimensional spin$^c$ vector bundle over a closed, connected,
smooth, spin$^c$ $6$-manifold $M$ with $w_2(M)=w_2(\xi)$.
Let $V$ be a $6$-dimensional real representation of $\U(2)$,
and let $l$ be a class in $H^2(M;\,\Z )$.
\begin{enumerate}
\item If $b(V)$ is odd, then  $\xi$ is isomorphic to $P\times_{\U(2)}V$ for some principal $\U(2)$-bundle $P$ over $M$ with $c_1=l$ if and only if $\rho_2(l)=w_2(\xi )\, (=w_2(M))$ and there is a class $u\in H^4(M;\ \Z)$ such that  
$$q_1(\xi;\ l)=\frac{1}{2}(b(V)-1)l^2- a(V)u\quad \text{and} \quad \pm e(\xi)=
c(V)lu+d(V)l^3.$$
\item  If $b(V)$ is even and $\rho_2(l)=w_2(\xi )=w_2(M)$, 
then  $\xi$ is isomorphic to $P\times_{\U(2)}V$ for some principal $\U(2)$-bundle $P$ over $M$ with $c_1=l$ if and only if $\rho_2(l)=0$ (so that $\xi$ and $M$ are spin) 
and there is a class $u\in H^4(M;\ \Z)$ such that  
$$q_1(\xi)=\frac{1}{2}b(V)l^2- a(V)u\quad \text{and} \quad \pm e(\xi)=c(V)lu+d(V)l^3.$$
\end{enumerate}
\end{thm}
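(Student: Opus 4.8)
The plan is to mirror the proof of Theorem \ref{7u2}, replacing the two $7$-dimensional tools by their $6$-dimensional counterparts: Proposition \ref{over6} in place of Proposition \ref{u2}, and Proposition \ref{spinc6} in place of Proposition \ref{spinc7}. The only genuinely new feature is that in even dimensions a bundle is no longer determined by $w_2$ and $q_1$ alone; one must also match the Euler class, and this is exactly what the extra condition $\pm e(\xi)=c(V)lu+d(V)l^3$ accounts for.

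For necessity, suppose $\xi\cong P\times_{\U(2)}V$ with $c_1(P)=l$, and set $u=c_2(P)$. Writing $\eta=P\times_{\U(2)}E$, so that $c_1(\eta)=l$ and $c_2(\eta)=u$, the characteristic class computations recorded in the table for $V$ (together with the general formulas (\ref{even}) and (\ref{odd})) give immediately the stated value of $q_1(\xi;l)$ (respectively $q_1(\xi)$) and the value $e(P\times_{\U(2)}V)=c(V)lu+d(V)l^3$ of the Euler class. Since the isomorphism $\xi\cong P\times_{\U(2)}V$ need not respect orientations, this determines $e(\xi)$ only up to sign, which is why the condition is stated as $\pm e(\xi)=c(V)lu+d(V)l^3$. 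In the even case the identity $w_2(P\times_{\U(2)}V)=b(V)\rho_2(l)=0$ forces $w_2(\xi)=0$, hence $\rho_2(l)=0$, accounting for that clause of the statement.

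For sufficiency I would proceed as follows. Suppose a class $u\in H^4(M;\,\Z)$ satisfying the two displayed equations exists (and, in the even case, that $\rho_2(l)=0$). By Proposition \ref{over6} there is a $2$-dimensional complex bundle $\eta$ with $c_1(\eta)=l$ and $c_2(\eta)=u$; write $\eta=P\times_{\U(2)}E$ for a principal $\U(2)$-bundle $P$ and form the oriented real bundle $\xi'=P\times_{\U(2)}V$. By the same characteristic class formulas used for necessity, $\xi'$ has $w_2(\xi')=\rho_2(l)=w_2(\xi)$, it has $q_1(\xi';l)=q_1(\xi;l)$ by the first displayed equation, and it has $e(\xi')=c(V)lu+d(V)l^3=\pm e(\xi)$ by the second. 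Proposition \ref{spinc6} then yields $\xi\cong\xi'$ (after reversing the orientation of $\xi'$ if the sign is negative), which is the desired reduction.

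The main point to get right will be the bookkeeping of the Euler class and its sign. A single parameter $u$ must now satisfy two equations at once, one for $q_1$ and one for $e$, whereas in the $7$-dimensional Theorem \ref{7u2} there was no Euler obstruction at all. The hypothesis of the theorem is phrased precisely so that such a common $u$ exists, and since Proposition \ref{over6} realizes an arbitrary $u$ as $c_2(\eta)$, both characteristic classes of $\xi'$ are then pinned down simultaneously; feeding them into Proposition \ref{spinc6} closes the argument.
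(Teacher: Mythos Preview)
Your proposal is correct and is exactly the argument the paper has in mind: the authors do not write out a proof of Theorem \ref{6u2} but state that one ``can follow the same lines as in Section 4'', meaning precisely the substitution of Proposition \ref{over6} for Proposition \ref{u2} and Proposition \ref{spinc6} for Proposition \ref{spinc7}, together with the extra Euler-class bookkeeping that you carry out. Your handling of the sign in $e(\xi)$ and of the clause $\rho_2(l)=0$ in the even case is accurate and matches the structure of the $7$-dimensional proof.
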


In special cases we obtain

\begin{cor}\label{Gon6}
Suppose that $M$ is a closed, connected, smooth, spin$^c$ manifold of
dimension $6$. Let $\xi$ be a $6$-dimensional spin$^c$ real vector bundle over
$M$ with $w_2(\xi)=w_2(M)$.
\begin{enumerate}
\item There is always a $3$-dimensional complex vector bundle $\mu$ over $M$ such
that $\xi\cong\mu$ as real vector bundles. Moreover, $\mu$ can be chosen in
such a way that $c_1(\mu)=l$, where $\rho_2(l)=w_2(\xi)$,  $c_2(\mu)=-q_1(\xi;l)$ and $c_3(\mu)=e(\xi)$.
\item There is a $2$-dimensional complex vector bundle $\eta$ such that
$\xi\cong\eta\oplus\R^2$ as real vector bundles if and only if $e(\xi)=0$. Moreover, $\eta$
can be chosen in such a way that $c_1(\eta)=l$, where $\rho_2(l)=w_2(\xi)$, and $c_2(\eta)=-q_1(\xi;l)$.
\item There is a $3$-dimensional real oriented vector bundle $\alpha$ such that
$\xi\cong\alpha\oplus\R^3$ if and only if $e(\xi)=0$ and $w_4(\xi)=0$. 
\item  There is a $3$-dimensional real spin vector bundle 
$\alpha$ such that
$\xi\cong\alpha\oplus\alpha$ if and only if $w_2(\xi)=0$, $e(\xi)=0$ and $q_1(\xi)\in 4H^4(M;\ \Z)$.
\item There is a $3$-dimensional real vector bundle $\alpha$ such that
that $\xi\cong S^2\alpha$ (the structure group $\SO(6)$ of $\xi$ reduces to $\SO(3)$ via the representation $A_2\oplus\R$) if and only if 
$e(\xi)=0$, $w_4(\xi)=0$ and $p_1(\xi)\in 5H^4(M;\ \Z)$.
\item There is a complex line bundle $\lambda$ with $c_1(\lambda)=l$, where $\rho_2(l)=w_2(\xi)$, such that
$\xi\cong\lambda\oplus\R^4$ as real vector bundles if and only if
$e(\xi)=0$ and $q_1(\xi;l)=0$.
\item $\xi$ is a trivial bundle if and only if $w_2(\xi)=0$, $q_1(\xi)=0$ and $e(\xi)=0$.
\end{enumerate}
\end{cor}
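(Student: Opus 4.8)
The plan is to read each item as a special case of the machinery already assembled: parts (2)--(7) are direct applications of Theorem \ref{6u2} to one row of the table of $6$-dimensional $\U(2)$-representations, whereas part (1) is the analogue for $6$-manifolds of Proposition \ref{7u3} and rests instead on Proposition \ref{u3} together with Proposition \ref{spinc6}.

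First I would dispose of part (1). The target is a complex $3$-plane bundle $\mu$ with $c_1(\mu)=l$, $c_2(\mu)=-q_1(\xi;l)$ and $c_3(\mu)=e(\xi)$; given such a $\mu$, Proposition \ref{spinc6} yields $\xi\cong\mu$ as real bundles at once, since $w_2(\mu)=\rho_2 l=w_2(\xi)$, $q_1(\mu;l)=-c_2(\mu)=q_1(\xi;l)$ and $e(\mu)=c_3(\mu)=e(\xi)$. So the only issue is the existence of $\mu$, and by Proposition \ref{u3} this needs the single mod-$2$ relation $\Sq{2}\rho_2(c_2)+\rho_2(lc_2)=\rho_2(c_3)$. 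On the closed $6$-manifold $M$ the Wu formula gives $\Sq{2}x=v_2x=w_2(M)x$ for $x\in H^4(M;\,\Z/2)$, so with $c_2=-q_1(\xi;l)$ and $w_2(M)=\rho_2 l$ the left-hand side collapses to $\rho_2(lc_2)+\rho_2(lc_2)=0$. Hence everything reduces to showing $\rho_2(e(\xi))=w_6(\xi)=0$. That is exactly where the hypothesis $w_2(\xi)=w_2(M)$ is used: the Wu relation $\Sq{2}w_4=w_2w_4+w_6$ together with $\Sq{2}w_4(\xi)=w_2(M)w_4(\xi)=w_2(\xi)w_4(\xi)$ forces $w_6(\xi)=0$, and the construction of $\mu$ goes through with no constraint, giving the word ``always''.

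For parts (2)--(7) I would simply select the representation $V$ whose associated bundle is the desired geometric object, read off $(a(V),b(V),c(V),d(V))$ from the table, and feed these into Theorem \ref{6u2}. Concretely: part (2) uses $V=E\oplus\R^2$ (the row $(L^{\otimes s}\otimes_\C E)\oplus L^{\otimes t}$ with $s=t=0$), where $b=1$, $a=1$, $c=d=0$, so the Euler condition becomes $\pm e(\xi)=0$ while $q_1(\xi;l)=-u$ is freely solvable; part (3) uses $A_1\oplus\R^3$ ($a=2$, $b=1$, $c=d=0$), giving $e(\xi)=0$ and $q_1(\xi;l)\in 2H^4(M;\,\Z)$, which is $w_4(\xi)=0$; part (4) uses $A_1\otimes_\R\C=A_1\oplus A_1$ (so $a=4$, $b=2$, $c=d=0$, the even case forcing $w_2=0$), giving $e(\xi)=0$ and $q_1(\xi)\in 4H^4(M;\,\Z)$; part (5) uses $A_2\oplus\R$ ($a=10$, $b=5$, $c=d=0$), giving $e(\xi)=0$ and $q_1(\xi;l)-2l^2\in 10H^4(M;\,\Z)$, the latter rewritten as $w_4(\xi)=0$ and $p_1(\xi)\in 5H^4(M;\,\Z)$ exactly as in the proof of Corollary \ref{7so3}(5); part (6) uses $L\oplus\R^4$ ($a=0$, $b=1$, $c=d=0$), giving $e(\xi)=0$ and $q_1(\xi;l)=0$; and part (7) uses the trivial representation $\R^6$ ($a=b=c=d=0$, even case), giving $w_2(\xi)=0$, $q_1(\xi)=0$ and $e(\xi)=0$. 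In each case the bundle $P\times_{\U(2)}V$ is visibly of the advertised form, and the Chern-class normalisations $c_1(\eta)=l$, $c_2(\eta)=-q_1(\xi;l)$ in (2) and $c_1(\mu),c_2(\mu),c_3(\mu)$ in (1) come straight from $c_1(P)=l$ and $c_2(P)=u$.

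I expect the one genuinely non-formal step to be the verification in part (1) that the mod-$2$ primary obstruction vanishes, i.e. that $w_6(\xi)=0$; this is the place where the spin$^c$-compatibility hypothesis $w_2(\xi)=w_2(M)$ does real work through the Wu formula, and it is what makes part (1) unconditional. Everything else is bookkeeping: matching each item to the correct row of the table, keeping track of the sign ambiguity $\pm e(\xi)$ supplied by Proposition \ref{spinc6} (harmless here since every relevant $e(V)$ vanishes), and translating the divisibility statements for $q_1$ into the Stiefel--Whitney and Pontryagin conditions $w_4(\xi)=0$ and $p_1(\xi)\in 5H^4(M;\,\Z)$.
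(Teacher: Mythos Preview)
Your proposal is correct and is exactly the argument the paper leaves implicit: the paper states Corollary~\ref{Gon6} without proof, prefaced only by ``In special cases we obtain'', so parts (2)--(7) are intended as direct specialisations of Theorem~\ref{6u2} to the indicated rows of the table, while part~(1) is the $6$-dimensional analogue of Proposition~\ref{7u3}, obtained from Proposition~\ref{u3} and Proposition~\ref{spinc6} together with the Wu-formula computation $w_6(\xi)=0$. Your identification of the relevant rows and the translations (e.g.\ $q_1(\xi;l)\in 2H^4\Leftrightarrow w_4(\xi)=0$, and $q_1(\xi;l)-2l^2\in 10H^4\Leftrightarrow w_4(\xi)=0$ and $p_1(\xi)\in 5H^4$) are all correct and mirror the parallel arguments given for Corollary~\ref{7so3}.
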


\section{Obstruction theory on $(4k+3)$-dimensional manifolds}
In this section we prove Theorem \ref{cs} using the
$K$-theoretic methods introduced by Atiyah and Dupont.
A proof in the case that $M$ is simply connected is given in
\cite[Theorem 0.4]{CS}, and a proof for the case that $\xi$
(or $\xi'\oplus\R$ or $\xi''\oplus\R^2$) is the tangent bundle
of $M$ is contained in \cite{AD, dupont}. 

We use $\Z /2$-equivariant $K$-theory and write $L$ in this section
for the $\Z /2$-module $\R$ with the action $\pm 1$.
The reduced real
$K$-theory of the Thom space of a virtual vector bundle
$\alpha$ over a compact Hausdorff 
space $X$ will be written as $\KO^*(X;\,\alpha )$.

We may assume that $M$ is connected and write its dimension
as $m=4k+3$; the tangent bundle of $M$ is denoted by $\tau M$.
The proof hinges on the vanishing of the equivariant $K$-groups
$\KO^{-m}_{\Z /2}(*;\, -nL)$ for any $n\in\Z$, see table 3.1 in \cite{euler}. 
This paper is probably the best source for a discussion of the 
$\Z /2$-equivariant $\KO$-obstruction theory and the usage of the $\KO_{\Z/2}$-Euler class.

\begin{proof}[Proof of {\rm (i)}]
Since $e(\xi )=0$, the sphere bundle $S(\xi )$ has a section and
so $\xi$ splits as $\xi'\oplus\R$ for some oriented vector bundle
$\xi'$ of dimension $4k+2=m-1$. We show first that $w_{m-1}(\xi')=0$
and do this by proving that $(x\cdot w_{m-1}(\xi'))[M]=0$
for any class $x\in H^1(M;\, \Z /2)$.
Let $\nu$ be a smooth real line bundle over $M$ with $w_1(\nu )=x$.
Choose a generic smooth section of $\nu$ with zero-set a submanifold
$N\subseteq M$ with normal bundle $\nu\,  |\, N$.
We have to show that $w_{m-1}(\xi' \,|\, N)[N]=0$.
Now there is a section $s'$ of $\xi'\, |\, N$ with only finitely
many zeros 
(or, indeed, with at most one zero in each component of $N$).  
To each zero of $s'$ we can assign an index in
$\Z /2$ as a degree (mod $2$), and the sum of these local
indices is equal, by a cohomology Hopf theorem, to 
$w_{m-1}(\xi'\, |\, N)[N]$. To show that the sum is zero, we 
shall repeat the computation using $\KO$-theory.

Consider the commutative diagram
$$
\begin{matrix}
\KO^0_{\Z /2}(N;\, -L\otimes\xi') &\to&  
\KO^0(N;\, -\xi') \\
\noalign{\smallskip}
{\cdot\eta (\nu )}\downarrow\phantom{\cdot\eta (\nu )}&&
\phantom{\cdot\eta (\nu )}\downarrow{\cdot\eta (\nu )}\\
\noalign{\smallskip}
\KO^0_{\Z /2}(N;\, \nu -L\otimes\xi') &\to&  
\KO^0(N;\,\nu -\xi') \\
\noalign{\smallskip}
{\simeq}\downarrow\phantom{\simeq} &&\phantom{\simeq} \downarrow{\simeq} \\
\KO^{-m}_{\Z /2}(N;\, -(m-1)L -\tau N)
&\to&\KO^{-m}(N; -(m-1)\R -\tau N) \\
\noalign{\smallskip}
{\pi_!}\downarrow\phantom{\pi_!} && \phantom{\pi_!}\downarrow{\pi_!} \\
\noalign{\smallskip}
\KO^{-m}_{\Z /2} (*;\, -(m-1)L) =0
&\to&\Z /2=
\KO^{-m}(*;\, -(m-1)\R )\, ,
\end{matrix}
$$
in which 
the Hopf element $\eta (\nu ) \in \KO^0(N;\, \nu )$
is constructed from a universal class $\eta (L)$
generating $\KO^0_{\Z /2}(*;\, L)\simeq \Z$ and restricting to the
Hopf generator $\eta$ of $\KO^0(*;\, \R )=(\Z /2 )\eta$,
the vertical isomorphisms are given by Bott isomorphisms
determined by the orientation of $\xi$ and
a choice of spin structure for $\xi - \tau M$ and
so for $(\xi' - \nu)|N -\tau N$,
and $\pi_!$ is the Umkehr homomorphism for the projection
$\pi : N \to *$ to a point.
The horizontal maps restrict from the $\Z /2$-equivariant to the
non-equivariant theory.
The $K$-theory Euler class $\gamma (\xi'\, |\, N)\in \KO^0(N;\, -\xi')$
maps to the sum of the local indices in $\Z /2$, because the relative
Euler class gives an isomorphism
$$
\pi_{m-2}(S^{m-2})=\Z \to \KO^0(D^{m-1},S^{m-2};\, -\R^{m-1})=\Z
$$
and multiplication by $\eta$
$$
\KO^0(D^{m-1},S^{m-2};\, -\R^{m-1})=\Z \to
\KO^{-1}(D^{m-1},S^{m-2};\, -\R^{m-1})=\Z/2
$$
is reduction (mod $2$).
Since $\gamma (\xi'\,|\, N)$ lifts in the commutative diagram
to the equivariant Euler class
$\gamma (L\otimes \xi'\, |\, N)\in 
\KO^0_{\Z /2}(N;\, -L\otimes \xi')$, the sum of the obstructions in 
$\Z /2$ must be zero,
and we have proved that $w_{m-1}(\xi')=0$.

Now if $s_0$ and $s_1$ are two sections of $S(\xi )$ with
complementary $(m-1)$dimensional bundles $\xi_0'$ and $\xi_1'$,
there is an associated difference class 
$d(s_0,s_1)\in H^{m-1}(M;\, \Z)$ with the property,
because $m-1$ is even, that
$e(\xi'_1)-e(\xi'_0)=2d(s_0,s_1)$.
Moreover, because the Hurewicz homomorphism $\pi_j(S^{m-1})
\to \tilde H_j(S^{m-1};\, \Z )$ is an isomorphism for $j<m$
and an epimorphism for $j=m$,
for every section $s_0$ and class $y\in H^{m-1}(M;\,\Z )$
there is a section $s_1$ such that $d(s_0,s_1)=y$.

We conclude from the vanishing of $w_{m-1}(\xi )$ that, 
for each $y\in H^{m-1}(M;\, \Z )$, there is
a section $s$ of $S(\xi )$ such that $e(\xi') =2y$.
\end{proof}
\begin{remark}
When $k=0$, $\xi'$ is a complex line bundle with $c_1(\xi')=0$
and so is trivial.  It follows that the bundle $\xi$ is trivial.
For the special case $\xi =\tau M$ we have reproved the classical
result that an oriented $3$-manifold is parallelizable. 
In (ii) and (iii), the bundles $\xi'$ and $\xi''$ are clearly
trivial if $k=0$.
\end{remark}
\begin{proof}[Proof of {\rm (ii)}]
The argument that follows is a reformulation of the method used in
\cite[Theorem 0.4]{CS}.

Consider the diagram
$$
\begin{matrix}
\KO^0_{\Z /2}(M;\, -L\otimes\xi') &\to&  
\KO^0(M;\, -\xi') \\
\noalign{\smallskip}
{\simeq}\downarrow\phantom{\simeq} &&\phantom{\simeq} \downarrow{\simeq} \\
\noalign{\smallskip}
\KO^{-m}_{\Z /2}(M;\, -(m-1)L -\tau M)
&\to&\KO^{-m}(M; -(m-1)\R -\tau M) \\
{\pi_!}\downarrow\phantom{\pi_!} && \phantom{\pi_!}\downarrow{\pi_!} \\
\noalign{\smallskip}
\KO^{-m}_{\Z /2} (*;\, -(m-1)L) =0
&\to&\Z /2=
\KO^{-m}(*;\, -(m-1)\R )\, ,
\end{matrix}
$$
in which the vertical Bott isomorphisms are again determined by the
orientation of $\xi'$ and a choice of spin structure for 
$\xi' -\tau M$ and $\pi_!$ is the Umkehr for the map $\pi : M\to *$.

The vanishing of $e(\xi')$ guarantees that there is a section $s$ of
$S(\xi')$ over the complement of the interior of an embeddded
disc $D^m \subseteq M$. 
The obstruction to extending $s$ over $M$ lies
in $\pi_{m-1}(S^{m-2}) =(\Z /2)\eta$ if $k>0$. 
(If $k=0$, the obstruction group is zero.)
It is detected by the $K$-theory Euler class
$\gamma (\xi')\in \KO^0(M;\, -\xi')$, 
because the relative Euler class gives an isomorphism
$$
\pi_{m-1}(S^{m-2}) \to \KO^0(D^m, S^{m-1};\, -(m-1)\R)=\Z /2\, .
$$
Since $\gamma (\xi')$ lifts in the commutative diagram above to
the $\Z /2$-equivariant Euler class $\gamma (L\otimes\xi')$,
the obstruction vanishes and $s$ extends to a section of $S(\xi')$
over $M$.
\end{proof}

\begin{proof}[Proof of {\rm (iii)}]
We may assume that $k>0$.
Choose a triangulation of the manifold $M$. The vanishing of 
$e(\xi'')$ guarantees that there is a section $s$ of $S(\xi'')$
over the $(m-2)$-skeleton $M^{(m-2)}$. Since $\pi_{m-2}(S^{m-3})
=(\Z /2)\eta$, there is an associated obstruction class
$\oo (s) \in H^{m-1}(M;\, \Z /2)$ to extending $s$ over $M$:
if $\oo (s)=0$, there is a section of $S(\xi'')$ over $M^{(m-1)}$
(coinciding with $s$ on $M^{(m-3)}$).

Following the method used to establish (i), we shall
show that $\oo (s)=0$.
Given $x\in H^1(M;\,\Z /2)$ we choose, as in (i), a line bundle 
$\nu$ and submanifold $N$. If we fix a triangulation of $N$, 
the inclusion of $N$ in $M$ is homotopic to a cellular map 
$f: N\to M$ and $f^*s$
on $N^{(m-2)}$ extends to a section of $f^*\xi''$ with zeros at the
barycentres of the $(m-1)$-cells.  Then $(x\cdot \oo (s))[M]=
\oo (f^*s)[N]$.

The $\KO$-theory Hopf theorem shows that $\oo (f^*s)[N]=0$. This time
the relative Euler class gives an isomorphism
$$
\pi_{m-2}(S^{m-3})=\Z /2 \to \KO^0(D^{m-1},S^{m-2};\, -\R^{m-2})
=\Z /2
$$
and multiplication by $\eta$ is an isomorphism
$$
\KO^0(D^{m-1},S^{m-2};\, -\R^{m-2})=\Z /2
\to \KO^{-1}(D^{m-1},S^{m-2};\, -\R^{m-2})=\Z /2\, .
$$

The vanishing of $\oo (s)$ implies that $S(\xi'')$ admits a section
on the complement of the interior of an embedded disc 
$D^m\subseteq M$. 
One shows finally, by the method used in (ii) (and \cite{CS}), that 
the obstruction in $\pi_{m-1}(S^{m-3})=(\Z /2)\eta^2$
to extending this section over the disc vanishes, because
the local obstruction map
$$
\pi_{m-1}(S^{m-3}) \to \KO^0(D^m,S^{m-1};\, -\R^{m-2})=\Z /2
$$
is an isomorphism.
\end{proof}
\begin{remark}
Massey showed in \cite{massey} that $w_{4k}(M)$ is the reduction
of an integral class.
(We have $w_{4k}(M) = Sq^{2k}v_{2k}(M) + Sq^{2k-1}v_{2k+1}(M)
=v_{2k}(M)^2+ Sq^1(Sq^{2k-2}v_{2k+1}(M))$.  But
$v_{2k}(\tau M)^2 = v_{4k}(\C \otimes\tau M)$ is integral, as is any class in the image of $Sq^1$.)
 It follows that, if
$\xi''$ is stably equivalent to $\tau M$, then
$e(\xi'')=\delta^*w_{4k}(M)=0$. In particular,
as is noted in \cite{dupont}, the manifold $M$ admits $3$
linearly independent vector fields.
\end{remark}


\section{Appendix}
In this final section we give an alternative proof of the following statement which is considered to be well known:

\begin{thm}\label{euler}
 Let $M$ be a connected oriented closed manifold of even dimension $m$ and let $\xi$ and $\xi'$ be two oriented real vector bundles over $M$ of dimension $m$ which are stably isomorphic. Then they are isomorphic  if and only if 
$$e(\xi)=\pm e(\xi').$$ 
\end{thm}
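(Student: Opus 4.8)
The plan is to prove the nontrivial implication by a single-stage obstruction argument, realising $\xi$ and $\xi'$ as orthogonal complements of nowhere-zero sections of one fixed oriented $(m+1)$-dimensional bundle and comparing these sections. The reverse implication is immediate: an isomorphism of the underlying real bundles either preserves or reverses orientation, giving $e(\xi)=e(\xi')$ or $e(\xi)=-e(\xi')$. For the forward implication I would first reduce the stable isomorphism $\xi\oplus\R^N\cong\xi'\oplus\R^N$ to a single stabilisation. The homotopy fibre of $B\SO(m+1)\to B\SO(\infty)$ is $\SO/\SO(m+1)$, which is $m$-connected, so $[M,B\SO(m+1)]\to[M,B\SO(\infty)]$ is a bijection for the $m$-dimensional complex $M$ (the existence and difference obstructions lie in $H^{i+1}(M;\pi_i(\SO/\SO(m+1)))$ and $H^{i}(M;\pi_i(\SO/\SO(m+1)))$ with $i\le m$, hence vanish). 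Therefore $\zeta:=\xi\oplus\R\cong\xi'\oplus\R$ as oriented $(m+1)$-dimensional bundles.

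Fix such an isomorphism and orient $\zeta$ by the orientation of $\xi$. The trivial summands give two nowhere-zero sections $s_0,s_1$ of $\zeta$ whose complements, oriented by the orientation of $\zeta$ together with the co-orientation along the section, satisfy $s_0^\perp\cong\xi$ and $s_1^\perp\cong\xi'$. The antipodal section $-s_0$ has the same underlying complement but the opposite induced orientation, so $(-s_0)^\perp\cong\bar\xi$, the bundle $\xi$ with reversed orientation, whose Euler class is $-e(\xi)$. Thus $\xi$, $\bar\xi$ and $\xi'$ all arise as complements of the sections $s_0$, $-s_0$, $s_1$ of the single fibre-oriented sphere bundle $S(\zeta)\to M$, whose fibre $S^m$ is $(m-1)$-connected.

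Next I would run obstruction theory for sections of $S(\zeta)$. Because the fibre $S^m$ is $(m-1)$-connected and $\dim M=m$, any two sections $t_0,t_1$ have a single difference class $d(t_0,t_1)\in H^m(M;\pi_m(S^m))=H^m(M;\Z)$, the coefficients being untwisted since the oriented structure group $\SO(m+1)$ acts trivially on $\pi_m(S^m)$; moreover $t_0\simeq t_1$ through sections if and only if $d(t_0,t_1)=0$, and homotopic sections have isomorphic oriented complements. The key computation, exactly parallel to the Hopf-theorem argument in the proof of Theorem \ref{cs}(i), relates this difference class to Euler classes. Since $m$ is even, the fibre inclusion $S^m\hookrightarrow B\SO(m)$ of the fibration $S^m\to B\SO(m)\to B\SO(m+1)$ classifies the tangent bundle $\tau S^m$, whose Euler class is $\chi(S^m)=2$ times a generator of $H^m(S^m;\Z)$; hence changing a section by a difference class alters the Euler class of its complement by $2$ times that class:
\[
e(t_1^\perp)-e(t_0^\perp)=\chi(S^m)\,d(t_0,t_1)=2\,d(t_0,t_1)\in H^m(M;\Z).
\]
I expect this identification of the fibrewise Euler class, together with keeping the orientation conventions straight, to be the main point; everything else is formal.

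Finally I would assemble the pieces. As $M$ is connected, oriented and closed, $H^m(M;\Z)\cong\Z$ is torsion-free, so $2\,d(t_0,t_1)=0$ forces $d(t_0,t_1)=0$. Applying the displayed formula to the pairs $(s_0,s_1)$ and $(-s_0,s_1)$ gives
\[
e(\xi')-e(\xi)=2\,d(s_0,s_1),\qquad e(\xi')+e(\xi)=2\,d(-s_0,s_1).
\]
Hence $e(\xi)=e(\xi')$ forces $d(s_0,s_1)=0$, i.e.\ $s_0\simeq s_1$, so that $\xi\cong s_0^\perp\cong s_1^\perp\cong\xi'$ as oriented bundles; while $e(\xi)=-e(\xi')$ forces $d(-s_0,s_1)=0$, so that $\bar\xi\cong\xi'$ and therefore $\xi\cong\xi'$ as real vector bundles. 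In either case the hypothesis $e(\xi)=\pm e(\xi')$ produces an isomorphism, completing the proof.
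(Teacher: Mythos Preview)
Your proof is correct and follows essentially the same route as the paper: identify $\xi$ and $\xi'$ with complements of sections $s_0,s_1$ of $S(\xi\oplus\R)$, establish the formula $e(s_1^\perp)-e(s_0^\perp)=2\,d(s_0,s_1)$, and use that $H^m(M;\Z)\cong\Z$ is torsion-free to conclude. The only cosmetic differences are that you justify the factor $2$ via $\chi(S^m)$ and the fibre of $B\SO(m)\to B\SO(m+1)$, whereas the paper derives it from the identity $e(\zeta(s))=d(-s,s)$ together with the cocycle relation for $d$, and you spell out explicitly the destabilisation step $\xi\oplus\R\cong\xi'\oplus\R$ that the paper takes for granted.
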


For the proof we need some preparatory considerations. 
Let $\zeta$ be an oriented real vector bundle of dimension $n+1$ over
a finite complex $X$.  Given two sections $s_0$, $s_1$ of the sphere 
bundle $S(\zeta)$, the difference class $d(s_0,s_1)\in H^n(X;\,\Z)$
is defined as follows. Choose a homotopy $\tilde s_t$, 
$0\leq t\leq 1$, from $s_0$ to $s_1$ through sections of the disc
bundle $D(\zeta )$. Regarding $\tilde s$ as a map 
$([0,1],\{ 0,1\})\times X\to (D(\zeta ),S(\zeta ))$, we define
the difference class $d(s_0,s_1)$ to be $\tilde s^*(u)\in
H^n(X;\, \Z)\cong H^{n+1}(([0,1],\{ 0,1\})\times X;\, \Z)$, the pullback
of the Thom class 
$u\in H^{n+1}(D(\zeta ),S(\zeta );\, \Z)$.

Clearly, $d(s_0,s_1)=0$ if $s_0$ and $s_1$ are homotopic through 
sections of $S(\zeta )$.  By elementary obstruction theory, the  converse is true if 
$\dim X \leq n$.

Now, if $s_2$ is a third section, it is elementary to check that
$d(s_0,s_2)=d(s_0,s_1)+d(s_1,s_2)$. So $d(s_0,s_0)=0$ and
$d(s_1,s_0)=-d(s_0,s_1)$.

For a section $s$ of $S(\zeta )$, let $\zeta (s)$ be the 
complementary $n$-dimensional vector bundle; it is oriented
by the orientation of $\zeta$. It follows from the definition
of the Euler class of $\zeta (s)$ as the pullback of the
Thom class in $H^n(D(\zeta (s)),S(\zeta (s));\, \Z)$ by the
zero-section $(X,\emptyset )\to (D(\zeta (s)),S(\zeta (s))$
that $e(\zeta (s))=d(-s,s)\in H^n(X;\, \Z)$ (with the correct
choice of sign convention).

Now $d(-s_0,s_0)+d(s_0,s_1)+d(s_1,-s_1)+d(-s_1,-s_0)=d(-s_0,-s_0)=0$.
From the definitions, we have $d(s_1,-s_1)=-d(-s_1,s_1)$ and
$d(-s_0,-s_1)=(-1)^{n+1}d(s_0,s_1)$. Hence
$e(\zeta (s_0))+d(s_0,s_1) -e(\zeta (s_1)) +(-1)^n d(s_0,s_1)=0$,
that is,
\begin{equation*}
e(\zeta (s_1))-e(\zeta (s_0))=(1+(-1)^n)d(s_0,s_1).
\end{equation*}

\begin{proof}[Proof of Theorem \ref{euler}] Put $\zeta=\xi\oplus\R\cong\xi'\oplus\R$.
The bundle $\xi$  can be considered as the complementary bundle $\zeta(s_0)$ of a section $s_0$ of the sphere bundle $S(\zeta)$ and $\xi'$ can be considered up to orientation as the complementary bundle $\zeta(s_1)$ of another section $s_1$. Hence
$$e(\xi)=e(\zeta(s_0)) \quad \text{and} \quad e(\xi')=\pm e(\zeta(s_1)).$$
Using the formula derived above we get that $e(\xi')\pm e(\xi')=e(\zeta(s_0))-e(\zeta(s_1))=0$ if and only if $d(s_0,s_1)=0$. This means that $s_0$ and $s_1$ are homotopic through sections of $S(\zeta)$. Such a homotopy determines, up to homotopy, an isomorphism between $\zeta(s_0)$ and $\zeta(s_1)$ as oriented vector bundles. So the vanishing of $d(s_0,s_1)$ implies that $\xi$ and $\xi'$ are isomorphic as vector bundles.
\end{proof}


\end{document}